\newtheorem{theorem}{\noindent Theorem}
\newtheorem{lemma}{\noindent Lemma}
\newtheorem{corollary}{\noindent Corollary}
\newtheorem{defe}{\noindent Definition}
\newtheorem{Th}{Theorem}
\newtheorem{Lm}[Th]{Lemma}
\newtheorem{Co}[Th]{Corollary}
\newtheorem{proposition}[Th]{Proposition}
\def\eps{\varepsilon}
\DeclareMathOperator{\thi}{th}
\date{}
\author{A.~M.~Vershik, P.~B.~Zatitskiy, F.~V.~Petrov
}
\title{VIRTUAL CONTINUITY OF MEASURABLE FUNCTIONS OF SEVERAL VARIABLES AND EMBEDDINGS THEOREMS}
\begin{document}

\maketitle

\abstract{Classical Luzin's theorem states that the measurable function
of one variable is ``almost'' continuous. This is not so anymore for functions of several variables.
The search of right analogue of the Luzin theorem leads to a notion of virtually continuous
functions of several variables. This probably new notion appears
implicitly in the statements like embeddings theorems and traces theorems for Sobolev spaces.
In fact, it reveals their nature as theorems about virtual continuity. This notion is especially useful for
the study and classification of measurable functions, as well as in some questions on dynamical systems,
polymorphisms and bistochastic measures. In this work we recall necessary definitions and properties of
admissible metrics, define virtual continuity, describe some of applications.
Detailed analysis is to be presented in another paper.}

\let\thefootnote\relax\footnote{
St. Petersburg   Department   of   V.A.Steklov    Institute   of   Mathematics
RAS, St. Petersburg State University.
E-mail: avershik@gmail.com, paxa239@yandex.ru, fedyapetrov@gmail.com.
The work is supported by RFBR grant 11-01-00677-a, President of Russia grant MK-6133.2013.1, by Chebyshev Laboratory in SPbSU,
Russian Government grant 11.G34.31.0026.}

\section{Introduction. Admissible metrics, Luzin's theorem.}

\subsection{Admissible metrics}

We consider the Lebesgue-Rokhlin standard continuous (atomless) probabilistic measure space, isomorphic to the
unit segment $[0,1]$ with Lebesgue measure.  The first author proposes
\cite{V1,V2,V3} to consider on the fixed standard   space $(X, {\frak A}, \mu)$ different  ({\it admissible}) metrics,
on the contrast to usual approach, when metric space is fixed and different Borel measures are considered.
Such approach is useful and necessary in ergodic theory
and other situations. Agreement of the metric and measure structures leads to the notion of admissible metric triple:

\begin{defe}
A metric or semimetric $\rho$ on the space $X$ is called
\emph{admissible}
if it is measurable, regarded as a function of two variables, on the
Lebesgue space $(X\times X, \mu \times \mu)$ and there exists a subset
$X_0\subset X$ of full measure such that the semimetric space
$(X_0,\rho)$ is separable.

The standard probabilistic space $(X,\mu)$ with admissible (semi-)metric $\rho$ is called {\it admissible metric triple}
or just {\it admissible triple} $(X,\mu,\rho)$.
\end{defe}

Properties of admissible metrics are studied in details by the authors in \cite{ZP}, \cite{VPZ}.
In particular, these works contain several equivalent definitions of admissible triples.

Standartness of the space allows to get the following

\begin{proposition} Let $\rho$ be the admissible metric on $(X, {\frak A}, \mu)$. Then completed Borel sigma-algebra ${\frak B}={\frak B}(X,\rho)$
is a subalgebra of ${\frak A}$. The measure $\mu$ is inner regular with respect to metric $\rho$,
i.e. for any $A \in {\frak A}$ we have
$$ \mu(A)=\sup\{\mu(K)\colon K\subset A,  K \mbox{ is compact in metric } \rho\}.
$$  So, for any admissible metric $\rho$ the measure $\mu$ is Radon measure in the metric space $(X,\rho)$.
\end{proposition}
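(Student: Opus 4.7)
\emph{Inclusion $\mathfrak{B}\subset\mathfrak{A}$.} By Fubini's theorem applied to the jointly measurable $\rho$, there is a full-measure set $Y\subset X$ such that $\rho(y,\cdot)$ is $\mathfrak{A}$-measurable for every $y\in Y$; likewise $x\mapsto\mu(\{y:\rho(x,y)<r\})$ is $\mathfrak{A}$-measurable, so the $\rho$-support
$$
S=\bigcap_{k=1}^\infty\{x\in X_0:\mu(B_\rho(x,1/k))>0\}
$$
lies in $\mathfrak{A}$. Separability of $(X_0,\rho)$ together with Lindel\"of forces $X_0\setminus S$ to be covered by countably many zero-measure $\rho$-balls, so $\mu(S)=1$; by construction, every non-empty $\rho$-open subset of $S$ has positive $\mu$-measure. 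Hence $Y\cap S$ is $\rho$-dense in $S$, and I would fix a countable sequence $\{x_n\}\subset Y\cap S$ that is $\rho$-dense in $S$. The balls $B_\rho(x_n,r)$, $r\in\mathbb{Q}_{>0}$, are then in $\mathfrak{A}$ and generate the $\rho$-Borel $\sigma$-algebra of $S$; since $X_0\setminus S$ is $\mu$-null, the completed $\rho$-Borel $\sigma$-algebra of $X$ is contained in $\mathfrak{A}$.

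\emph{Inner regularity and Radon property.} Standardness of $(X,\mathfrak{A},\mu)$ provides a Polish topology $\tau$ on $X$ with $\mu$ being $\tau$-Radon and $\mathfrak{A}$ the $\mu$-completion of Borel$(\tau)$. Given $\eps>0$, I would pick a $\tau$-compact $S'\subset S$ with $\mu(S')>1-\eps/2$, then apply Luzin's theorem to each $\mathfrak{A}$-measurable function $\rho(x_n,\cdot)$ and intersect diagonally to obtain a $\tau$-compact $F\subset S'$ with $\mu(F)>1-\eps$ on which every $\rho(x_n,\cdot)$ is $\tau$-continuous. For any $y\in F\subset S$, density of $\{x_n\}$ furnishes $x_{n_k}\to y$ in $\rho$, and the triangle inequality $|\rho(z,y)-\rho(z,x_{n_k})|\leq\rho(y,x_{n_k})$ (for $z\in F$) exhibits $\rho(\cdot,y)|_F$ as a $\tau$-uniform limit of $\tau$-continuous functions, hence $\tau$-continuous on $F$. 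Consequently the $\rho$-topology on $F$ is coarser than $\tau|_F$, and every $\tau$-compact subset of $F$ is automatically $\rho$-compact in $X$. For arbitrary $A\in\mathfrak{A}$, $\tau$-Radonness applied to $A\cap F$ yields a $\tau$-compact $K\subset A\cap F$ with $\mu(K)>\mu(A)-2\eps$, giving the required $\rho$-compact approximation from inside $A$.

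\emph{Main obstacle.} The delicate point is reconciling joint measurability of $\rho$ with the separate measurability and density needed to build a generating family of balls: pure Fubini yields measurable sections only almost everywhere, so one must locate the dense sequence inside the $\rho$-support $S$, where measure and topology are compatible. Once such a coordinated countable family of sections is in hand, Luzin's theorem in the ambient Polish structure forces $\rho$ to be $\tau$-continuous on a large $\tau$-compact set, reducing inner regularity with respect to $\rho$-compacts to the already-known Radonness of $\mu$ in $\tau$.
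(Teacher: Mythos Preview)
The paper does not give a proof of this proposition; it is stated as a consequence of standardness, with details deferred to \cite{ZP} and \cite{VPZ}. So there is no in-text argument to compare against.

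Your plan is sound and follows what is essentially the standard route. Two remarks on execution:

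\emph{On $\mathfrak{B}\subset\mathfrak{A}$.} Your balls $B_\rho(x_n,r)$ generate the $\rho$-Borel $\sigma$-algebra of $S$, not of $X$. To finish, observe that for any $\rho$-Borel $B\subset X$ the trace $B\cap S$ is $\rho$-Borel in $S$, hence lies in $\mathfrak{A}$ by your argument, while $B\setminus S\subset X\setminus S$ is $\mu$-null; completeness of $\mathfrak{A}$ (this is a Lebesgue space) then gives $B\in\mathfrak{A}$. You gesture at this but it is worth spelling out.

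\emph{On inner regularity.} Your argument is in the same spirit as the paper's Lemma~\ref{equivalence} and Corollary~\ref{coincidence} (coincidence of two admissible topologies on a large set). Note, though, that in the paper's logic those results \emph{presuppose} this proposition: the hint for Lemma~\ref{equivalence} is to take a $(\rho_1+\rho_2)$-compact set of large measure, which is exactly inner regularity. You correctly avoid this circularity by bootstrapping from an auxiliary Polish topology $\tau$ for which Radonness is classical, then using Luzin on the countably many sections $\rho(x_n,\cdot)$ to make the $\rho$-topology coarser than $\tau$ on a large $\tau$-compact $F$. One small check: the uniform-limit step needs, for each $y\in F$, a sequence $x_{n_k}\to y$ in $\rho$; density of $\{x_n\}$ in $S\supset F$ supplies this, and you do not need the $x_{n_k}$ themselves to lie in $F$.
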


M. Gromov in the book \cite{G} suggests to consider arbitrary metric triples
$(X,\mu,\rho)$, which he calls $mm$-spaces. Also, Gromov asks the question about their classification,
having in mind classical situations (Riemannian manifolds and so on). It is natural to consider admissible triples
in this framework. Define equivalence of admissible triples up to measure-preserving isometries:
$(X, \mu, \rho)   \sim  (X', \mu', \rho')$,
if
$$\exists T:X\rightarrow X'; \quad T\mu=\mu'; \quad \rho'(Tx,Ty)=\rho (x,y).$$

Here is the main result on this equivalence:
\begin{theorem} {(Gromov \cite{G}; Vershik \cite{V1})}

Consider the map $F_{\rho}: X^{\infty}\times  X^{\infty}\rightarrow M_{\infty}(\Bbb R):$
$$
\quad F_{\rho}(\{x_i,y_j\}_{(i,j) \in \Bbb N\times \Bbb N})=\{\rho(x_i,y_j)\}_{(i,j) \in \Bbb N\times \Bbb N},
$$
and equip infinite product $X^{\infty}\times  X^{\infty}$ by the product-measure $\mu^{\infty}\times \mu^{\infty}$.
Let $D_{\rho}$ denote the measure on the space of matrices (i.e. random matrix of distances), which is
the $F_{\rho}$-image of the measure  $\mu^{\infty}\times \mu^{\infty}$. Call it
MATRIX DISTRIBUTION of the metric $\rho$. It is a complete invariant of above equivalence of admissible metrics.

In other words, $$(X, \mu, \rho)   \sim  (X', \mu', \rho') \Leftrightarrow   D_{\rho}= D_{\rho'}.$$
 \end{theorem}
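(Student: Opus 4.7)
The plan is to handle the two implications separately; the forward direction is formal, while the reverse requires an equidistribution argument and an isometric reconstruction.

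For the forward direction, suppose $T:X\to X'$ is a measure-preserving isometry. Then $T^\infty\times T^\infty:X^\infty\times X^\infty\to (X')^\infty\times (X')^\infty$ transports $\mu^\infty\times\mu^\infty$ to $(\mu')^\infty\times(\mu')^\infty$, and since $\rho'(Tx_i,Ty_j)=\rho(x_i,y_j)$ pointwise, we have $F_{\rho'}\circ(T^\infty\times T^\infty)=F_\rho$ almost everywhere; hence $D_\rho=D_{\rho'}$.

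For the converse, I would first establish a density/equidistribution lemma: for $\mu^\infty$-almost every sequence $(x_i)$, the set $\{x_i\}$ is $\rho$-dense in a conull subset of $X$, and the empirical measures $n^{-1}\sum_{i\le n}\delta_{x_i}$ converge weakly to $\mu$. Admissibility is essential here: by the proposition above, $\mu$ is Radon with respect to $\rho$, and by separability of $(X_0,\rho)$ one can fix a countable base of balls of positive $\mu$-measure and apply the strong law of large numbers to each of their indicators.

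Now assume $D_\rho=D_{\rho'}$. Let $G\subset X^\infty\times X^\infty$ and $G'\subset (X')^\infty\times (X')^\infty$ be the full-measure sets of pairs to which the preceding lemma applies on each coordinate. Then $F_\rho(G)$ and $F_{\rho'}(G')$ both have full measure under the common matrix distribution, so they meet; choose a matrix in their intersection, realized by $(\{x_i\},\{y_j\})\in G$ and $(\{x'_i\},\{y'_j\})\in G'$. Then $\rho(x_i,y_j)=\rho'(x'_i,y'_j)$ for all $i,j$, and by equidistribution each $x_j$ is a $\rho$-limit of a subsequence of $(y_k)$, so passing to the limit also yields $\rho(x_i,x_j)=\rho'(x'_i,x'_j)$ for all $i,j$. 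Define $T(x_i):=x'_i$; this isometric bijection of dense countable subsets extends uniquely to an isometry between the (conull) metric completions of $\{x_i\}\subset X$ and $\{x'_i\}\subset X'$. Measure preservation follows by letting $n\to\infty$ in $T_*\bigl(n^{-1}\sum_{i\le n}\delta_{x_i}\bigr)=n^{-1}\sum_{i\le n}\delta_{x'_i}$ and invoking weak convergence on both sides.

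The main obstacle is the density/equidistribution lemma together with the verification that the extension of $T$ from a countable dense set to a Borel isomorphism between conull subsets of $X$ and $X'$ is both measurable and measure-preserving; these steps depend crucially on the separability built into the definition of admissibility and on the Radonness of the measures provided by the preceding proposition.
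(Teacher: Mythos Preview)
The paper does not actually prove this theorem: it is quoted with attribution to Gromov \cite{G} and Vershik \cite{V1} and used as background, so there is no in-paper argument to compare your proposal against.

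That said, your outline is the standard route taken in those references and is essentially correct. The forward direction is the trivial one, as you note. For the converse, the scheme (almost-sure density and equidistribution of i.i.d.\ samples via separability plus the strong law; intersecting the two full-measure images inside $M_\infty(\mathbb{R})$ to produce matched sampling sequences; extending the induced countable isometry by completion; and deducing measure preservation from weak convergence of empirical measures) is precisely the reconstruction argument in \cite{V1}. Two places deserve a line of justification when you write it out. First, the passage from $\rho(x_i,y_j)=\rho'(x'_i,y'_j)$ to $\rho(x_i,x_j)=\rho'(x'_i,x'_j)$: since the entries $\rho(x_j,y_k)=\rho'(x'_j,y'_k)$ are recorded in the matrix, $y_{k_n}\to x_j$ in $(X,\rho)$ forces $y'_{k_n}\to x'_j$ in $(X',\rho')$, and then one lets $k\to\infty$ along this subsequence in the $i$-th row using the triangle inequality. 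Second, the isometric extension of $T$ a priori maps into the metric completion of $\{x'_i\}$; identifying that completion with a conull Borel subset of $X'$ (so that $T$ becomes a genuine measure-preserving map between the standard spaces) uses the Radonness supplied by the proposition preceding the theorem together with separability. Once these two points are spelled out, the argument is complete.
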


In \cite{V4} this result is generalized to the so called \emph{pure} measurable functions of several variables.

The following  lemma is useful in the theory of admissible metrics:
\begin{Lm}\label{equivalence}
Let $\rho_1$, $\rho_2$ be admissible semimetrics on the standard space $(X, \mu)$,
and suppose that $\rho_1$ is metric. Then for any $\varepsilon>0$ there exists measurable subset $K \subset X$
such that $\mu(K)>1-\varepsilon$ and semimetric $\rho_2$ (as a function of two variables)
is continuous on $K\times K$ with respect to metric $\rho_1$.
\end{Lm}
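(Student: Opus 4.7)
The plan is to reduce the joint $\rho_1$-continuity of $\rho_2$ on $K \times K$ to the one-variable statement that the identity map $(K, \rho_1) \to (K, \rho_2)$ is continuous, and to prove the latter by applying the classical Luzin theorem to the family of measurable sections $g_n(x) := \rho_2(x, y_n)$ associated with a countable $\rho_2$-dense sequence $\{y_n\}$. The key observation is that the triangle inequality
\[
|\rho_2(x, y) - \rho_2(x_0, y_0)| \le \rho_2(x, x_0) + \rho_2(y, y_0)
\]
converts continuity of the identity into joint continuity of $\rho_2$ on $K\times K$.

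To construct $K$, I would first, using admissibility of $\rho_2$, choose a countable sequence $\{y_n\} \subset X$ which is $\rho_2$-dense in a full-$\mu$-measure subset of $X$ and such that every section $g_n$ is $\mu$-measurable. Joint measurability of $\rho_2$ gives measurability of $\mu$-almost every section by Fubini; picking $\{y_n\}$ from the intersection of this a.e.\ set with the $\rho_2$-separability set is legitimate after first passing, if necessary, to the $\rho_2$-support of $\mu$ so that full-measure subsets are automatically $\rho_2$-dense. Proposition~1 applied to $\rho_1$ makes $\mu$ Radon on $(X, \rho_1)$, so the classical Luzin theorem supplies for each $n$ a $\rho_1$-compact $K_n$ with $\mu(X \setminus K_n) < \varepsilon/2^{n+1}$ on which $g_n$ is $\rho_1$-continuous. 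Setting $K := \bigcap_n K_n$ (intersected with the chosen full-measure set) gives $\mu(K) > 1 - \varepsilon$ and renders every $g_n$ continuous on $K$ in the metric $\rho_1$.

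The verification is then immediate: given $x_0 \in K$ and $\eta > 0$, pick $y_n$ with $\rho_2(x_0, y_n) < \eta/3$; $\rho_1$-continuity of $g_n$ at $x_0$ inside $K$ provides $\delta > 0$ such that $x \in K$ with $\rho_1(x, x_0) < \delta$ forces $\rho_2(x, y_n) < 2\eta/3$, whence $\rho_2(x, x_0) \le \rho_2(x, y_n) + \rho_2(y_n, x_0) < \eta$. This is continuity of the identity $(K, \rho_1) \to (K, \rho_2)$, and the triangle inequality then gives joint continuity of $\rho_2$ on $K \times K$. The main subtle point is that a direct application of Luzin to $\rho_2$ viewed as a function on $(X \times X, \mu \times \mu)$ with the admissible product metric $\rho_1 \oplus \rho_1$ produces a large continuity set $F \subset X \times X$ that is generally not of product form $K \times K$; the one-variable route via the sections $g_n$, together with the triangle-inequality reduction, is what lets us extract a product set.
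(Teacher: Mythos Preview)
Your argument is correct, including the slightly delicate step of choosing the $y_n$: passing to the $\rho_2$-support of $\mu$ inside a full-measure $\rho_2$-separable set guarantees that any conull subset (in particular the Fubini set of $y$ with measurable section $x\mapsto\rho_2(x,y)$) is $\rho_2$-dense, so a countable dense subsequence with measurable sections exists; and the final triangle-inequality reduction from continuity of the identity $(K,\rho_1)\to(K,\rho_2)$ to joint $\rho_1$-continuity of $\rho_2$ on $K\times K$ is clean.

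The paper, however, gives a one-line argument that avoids sections, Luzin, and the density bookkeeping altogether: take $K$ to be any subset with $\mu(K)>1-\varepsilon$ that is compact in the admissible metric $\rho:=\rho_1+\rho_2$ (such $K$ exists by the inner regularity of Proposition~1 applied to $\rho$). Since $\rho_1\le\rho$, the identity $(K,\rho)\to(K,\rho_1)$ is a continuous bijection from a compact space onto a Hausdorff space, hence a homeomorphism; thus the $\rho_1$- and $\rho$-topologies on $K$ coincide, and $\rho_2$, being $\rho$-Lipschitz, is automatically $\rho_1$-continuous on $K\times K$. Your route is more hands-on and makes explicit the reduction to one-variable Luzin; the paper's route is shorter and, as a bonus, produces a $K$ that is compact in both metrics simultaneously, which is exactly what is needed for Corollary~\ref{coincidence}.
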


We may choose $K$ as a compact subset with respect to admissible metric $\rho=\rho_1+\rho_2$,
if $\mu(K)>1-\eps$.

Lemma immediately implies the

\begin{Co}\label{coincidence}
Let $\rho_1$ and $\rho_2$ be two admissible metrics on the standard space $(X, \mu)$.
Then for any $\varepsilon>0$ there exists $K \subset X$ such that
$\mu(K)>1-\varepsilon$ and topologies defined by metrics $\rho_1$ and $\rho_2$ on $K$ coincide.
\end{Co}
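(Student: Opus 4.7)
The plan is to apply Lemma \ref{equivalence} twice, once with each metric playing the role of the reference metric, and then intersect the resulting sets.

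First I would set $\varepsilon' = \varepsilon/2$ and apply Lemma \ref{equivalence} with $\rho_1$ as the reference metric and $\rho_2$ as the semimetric to be controlled: this yields a measurable set $K_1 \subset X$ with $\mu(K_1) > 1 - \varepsilon'$ such that $\rho_2 \colon K_1 \times K_1 \to \mathbb{R}$ is continuous in the topology induced by $\rho_1$. The key consequence I would then extract is that the identity map $(K_1, \rho_1) \to (K_1, \rho_2)$ is continuous: indeed, if $x_n \to x$ in $\rho_1$ with all points in $K_1$, then $(x_n, x) \to (x, x)$ in the $\rho_1$-product topology on $K_1 \times K_1$, hence $\rho_2(x_n, x) \to \rho_2(x, x) = 0$.

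Next I would repeat the argument with the roles reversed. This is legitimate precisely because the hypothesis of the corollary requires both $\rho_1$ and $\rho_2$ to be metrics (not merely semimetrics), so the lemma applies in either direction. This gives a set $K_2$ with $\mu(K_2) > 1 - \varepsilon'$ on which the identity map $(K_2, \rho_2) \to (K_2, \rho_1)$ is continuous.

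Finally I would put $K = K_1 \cap K_2$, so that $\mu(K) > 1 - 2\varepsilon' = 1 - \varepsilon$. Restricting to $K$, both identity maps $(K, \rho_1) \to (K, \rho_2)$ and $(K, \rho_2) \to (K, \rho_1)$ are continuous, and therefore the two topologies coincide on $K$.

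There is no substantial obstacle here; the only point that needs a word of care is the passage from joint continuity of the semimetric to continuity of the identity map, but this follows immediately by testing on sequences of the form $(x_n, x)$ as above. The whole argument is essentially a symmetrization of Lemma \ref{equivalence}.
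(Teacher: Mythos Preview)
Your argument is correct and is precisely the natural ``symmetrization'' the paper has in mind when it says the corollary follows immediately from Lemma~\ref{equivalence}. One small remark: the proof sketch of the lemma in the paper (take $K$ compact for $\rho=\rho_1+\rho_2$) actually yields a single set that works for both directions at once, since a continuous bijection from a compact space to a Hausdorff space is a homeomorphism; but your two-application-and-intersect route is equally valid and arguably cleaner in that it uses only the statement of the lemma.
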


\subsection{Luzin's theorem on measurable functions of one variable}

Furthermore we consider (measurable) real-valued functions, though most of our results remain true
for maps into standard Borel space, in particular into Polish spaces.
Egorov's and Luzin's classical theorems on measurable functions of one variable are well-known.
The generalized Luzin's theorem for arbitrary admissible triple follows from above results:

\begin{Co}[Luzin's theorem] Let  $\rho$ be an admissible metric on the standard space $(X,\mu)$,
let $f$ be a measurable map from $X$ into Polish space $(M,d)$. Then for any $\varepsilon>0$ there exists
a measurable subset $K \subset X$ such that $\mu(K)>1-\varepsilon$ and $f$ is continuous on $K$ with respect to
metric $\rho$.
\end{Co}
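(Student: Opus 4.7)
The plan is to reduce the statement to Lemma \ref{equivalence} by pulling back the metric on $M$ to a semimetric on $X$ via $f$, and then extracting continuity of $f$ from the continuity of the pulled-back semimetric.

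Concretely, I would define $\rho_f\colon X\times X\to[0,\infty)$ by $\rho_f(x,y)=d(f(x),f(y))$. The first step is to verify that $\rho_f$ is an admissible semimetric on $(X,\mu)$. Measurability of $\rho_f$ as a function of two variables follows from measurability of $f$ together with continuity (hence Borel measurability) of the distance $d$ on $M\times M$. For the separability clause, observe that $M$ is Polish and therefore separable; hence the pseudometric space $(X,\rho_f)$ is separable, because a countable dense set in $f(X)\subset M$ pulls back (via a measurable selection of preimages) to a countable set which is $\rho_f$-dense on a full-measure subset of $X$.

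Having established admissibility of $\rho_f$, I would apply Lemma \ref{equivalence} with $\rho_1=\rho$ (which is a genuine metric by hypothesis) and $\rho_2=\rho_f$. This yields, for the given $\varepsilon>0$, a measurable set $K\subset X$ with $\mu(K)>1-\varepsilon$ on which the function $(x,y)\mapsto \rho_f(x,y)=d(f(x),f(y))$ is continuous as a function of two variables with respect to $\rho$. Restricting to the diagonal, if $x_n,x\in K$ and $\rho(x_n,x)\to 0$, then $d(f(x_n),f(x))=\rho_f(x_n,x)\to \rho_f(x,x)=0$, i.e.\ $f(x_n)\to f(x)$ in $M$. Thus $f|_K$ is $\rho$-continuous, which is the desired conclusion.

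The only nontrivial point is the verification of admissibility of $\rho_f$; after that the corollary is a direct specialization of Lemma \ref{equivalence}. I expect the separability check to be the main (minor) obstacle, since one must pass from separability of the target Polish space to separability of the pulled-back pseudometric on a set of full measure; this is handled by using a countable dense subset of $f(X)$ and invoking standardness of $(X,\mu)$ to obtain measurable preimages.
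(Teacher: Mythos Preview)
Your proof is correct and follows essentially the paper's route: the paper sets $\rho_1(x,y)=\rho(x,y)+d(f(x),f(y))$ and then invokes Corollary~\ref{coincidence}, whereas you use the bare pullback $\rho_f=d(f(\cdot),f(\cdot))$ and appeal directly to Lemma~\ref{equivalence}, which is one step more direct. The separability check is simpler than you make it: any choice of a single preimage for each point of a countable dense subset of $f(X)\subset M$ is already $\rho_f$-dense in all of $X$, so no measurable selection or passage to a full-measure subset is needed.
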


\begin{proof} Set $\rho_1(x,y)=\rho(x,y)+d(f(x),f(y))$. Then $\rho_1$ is a trivial example of
an admissible metric, with respect to which $f$ is continuous. By
\ref{coincidence} there exist a subset $K$ having measure $\mu(K)>1-\eps$, on which this
continuity implies continuity with respect to $\rho$.
\end{proof}

But this fact does not hold true for functions of several variables.

\section{Virtual continuity}

\subsection{Definitions and first examples}
Let $f(\cdot,\cdot)$ be a measurable function of two variables. Then Luzin's theorem analogue
(continuity on the product $X'\times Y'$ of sets of measure $>1-\eps$ with respect to given metric
$\rho[(x_1,y_1),(x_2,y_2)]=\rho_X(x_1,x_2)+\rho_Y(y_1,y_2)$) is not in general true.
This leads to the following key notion of this work.
(Sum of metrics may be replaced to maximum or other metric defining the topology of direct product.
To stress this we denote generic metric with such topology by $\rho_X \times \rho_Y$).

\begin{defe} Measurable function $f(\cdot,\cdot)$ on the product $(X,\mu)\times (Y,\nu)$ of standard spaces
is called \emph{virtually continuous}, if for any $\varepsilon  >0$ there exist sets $X'\subset X,  Y'\subset Y$,
each of which having measure $1-\eps$, and admissible semimetrics $\rho_X,\rho_Y$ on $X',Y'$
respectively such that function  $f$ is continuous on  $(X'\times Y',\rho_X \times \rho_Y)$.
virtual functions of several variables are defined in the same way.
\end{defe}
It is essential that admissible metric with respect to which function becomes continuous is not arbitrary,
but respects the structure of direct product (in more general setting, it respects selected subalgebras, see further).
It is easy to verify that there does not exist universal metric of such type (i.e. such a metric that virtual
continuity implies continuity in this metric). It explains the non-trivial properties of defined notion.

It is clear that any admissible metric (considered as a function of two variables) is virtually continuous.
So is any function, which is continuous with the respect to product of admissible metrics.
Degenerated functions (or ``finite rank functions'') $f(x,y)=\sum_{i=1}^n \varphi_i(x)\psi_i(y)$, where $\phi_i(\cdot), \psi_i(\cdot)$, $i=1,\dots n$
are arbitrary measurable functions, are also virtually continuous. For the proof just use Luzin's theorem for all functions
$\varphi_i(\cdot)$,  $i=1 \dots n$, and $\psi_i(\cdot)$,  $i=1 \dots n$.

less trivial examples of virtually continuous functions are given by functions from some Sobolev spaces
and kernels of trace class operators.
For virtually continuous functions there exist well-define restrictions on some subsets of zero measure --- concretely,
onto supporters of (quasi)bistochastic measures, see next paragraph.

An easy example of not virtually continuous measurable function on $[0,1]^2$ is provided by the
characteristic function of the triangle $\{x\geq y\}$. In general, for functions on the square of a compact group
depending of the ratio of variables the criterion of virtual continuity is simple:

\begin{proposition}\label{gruppa} Let $G$ be a metrizable compact group, $f$ be a Haar measurable function on
$G$. Then the function $F(x,y):=f(xy^{-1})$ on $G\times G$ is virtually continuous if and only if
$f$ is equivalent to a continuous function.
\end{proposition}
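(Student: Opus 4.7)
\smallskip
\noindent\textbf{Proof plan.} The ``if'' direction is immediate. Any compact metrizable group $G$ carries a bi-invariant metric $d$ inducing its topology (Haar-average a Birkhoff--Kakutani left-invariant metric), and such $d$ is automatically admissible with respect to Haar measure $\mu$; if $f$ is continuous then $F(x,y)=f(xy^{-1})$ is continuous on all of $(G\times G,d\times d)$, so the definition of virtual continuity is satisfied with $X'=Y'=G$ and $\rho_X=\rho_Y=d$.

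For the converse, assume $F$ is virtually continuous and fix a small $\eps>0$, obtaining $X',Y'\subset G$ and admissible semimetrics $\rho_X,\rho_Y$ with $F$ continuous on $(X'\times Y',\rho_X\times\rho_Y)$. The first --- and main --- step is to exchange $\rho_X\times\rho_Y$ for the honest product topology $d\times d$ on $G\times G$. Since $\rho_X+d|_{X'}$ is an admissible metric whose topology is finer than that of $\rho_X$, $F$ is still continuous in $(\rho_X+d)\times(\rho_Y+d)$; applying Corollary~\ref{coincidence} to the admissible metrics $\rho_X+d,d$ on $X'$ and to $\rho_Y+d,d$ on $Y'$, followed by inner regularity, we produce compact sets $A\subset X'$ and $B\subset Y'$ of measure $>1-C\eps$ on which $F$ admits a continuous --- hence uniformly continuous, as $A\times B$ is compact --- representative $\phi$ with respect to $d\times d$. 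Upgrading the semimetrics $\rho_X,\rho_Y$ to metrics before invoking Corollary~\ref{coincidence} is the principal technical subtlety.

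The heart of the argument is then a short right-translation trick exploiting the factorization $F(x,y)=f(xy^{-1})$. For any $z_1,z_2\in G$, the set $B_{z_1,z_2}:=B\cap z_1^{-1}A\cap z_2^{-1}A$ has Haar measure at least $\mu(B)+2\mu(A)-2$, which is positive for $\eps$ small, so we may pick $y\in B_{z_1,z_2}$. Then $(z_1y,y),(z_2y,y)\in A\times B$ share their second coordinate and satisfy $d(z_1y,z_2y)=d(z_1,z_2)$ by right-invariance of $d$, so uniform continuity of $\phi$ forces $|\phi(z_1y,y)-\phi(z_2y,y)|<\eta$ whenever $d(z_1,z_2)<\delta$. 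Since $\phi=F$ almost everywhere on $A\times B$ and $F(x,y)=f(xy^{-1})$, Fubini converts this into $|f(z_1)-f(z_2)|<\eta$ for almost every pair $(z_1,z_2)\in G\times G$ with $d(z_1,z_2)<\delta$; moreover $f\in L^\infty$, because $\phi$ is bounded on the compact $A\times B$. Hence $\|R_hf-f\|_\infty\to 0$ as $h\to e$ in $G$, and convolving $f$ with normalized indicators $\chi_{B_\delta(e)}/\mu(B_\delta(e))$ of small neighborhoods of the identity yields a sequence of continuous functions converging uniformly to $f$, so $f$ is equivalent to a (in fact uniformly) continuous function.
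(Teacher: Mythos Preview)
The paper states Proposition~\ref{gruppa} without proof (the abstract and introduction announce that detailed proofs appear elsewhere), so there is no argument in the text to compare against. Your plan is sound and would yield a complete proof after light polishing.

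Two remarks. First, the detour through Corollary~\ref{coincidence} and the semimetric-to-metric upgrade can be avoided entirely: Theorem~\ref{arbitrarymetrics} already lets you fix $\rho_X=\rho_Y=d$ from the start and obtain $X',Y'$ of measure $>1-\eps$ on which $F$ is $d\times d$-continuous; then pass to compact $A\subset X'$, $B\subset Y'$ by inner regularity. This removes what you flagged as ``the principal technical subtlety''. Second, the clause ``Hence $\|R_hf-f\|_\infty\to0$ as $h\to e$'' slightly overstates what the Fubini step delivers: you only get $|f(z_1)-f(z_2)|<\eta$ for \emph{almost every} pair with $d(z_1,z_2)<\delta$, hence $\|R_hf-f\|_\infty\le\eta$ only for almost every $h\in B_\delta(e)$. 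That is still enough for your convolution conclusion (for a.e.\ $z$ the inner integrand is bounded by $\eta$ for a.e.\ $h$), or alternatively note that the a.e.\ pair estimate forces the essential oscillation of $f$ on every ball of radius $\delta/2$ to be at most $2\eta$, which yields a uniformly continuous representative directly without invoking translations.
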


Stress once more that the definition of virtual
continuity is not topological, but measure-theoretical in nature.
It applies to the choice of various metrics on the measure space.
So, the direct sense of the proposition \ref{gruppa} is that
group structure and measure-theoretical structure allow to
reconstruct topology.

\subsection{Bistochastic measures and polymorphisms}

From the measure-theoretical point of view a function of $k$ variables
on the product of standard continuous spaces is nothing but the function on
the standard continuous space (due to isomorphism of all such spaces).
In order to deal with it as a function of $k$ variables, we have to introduce another
category, then just measurable spaces.

namely, consider the following structure: the measure space $({\cal X},{\frak A}, m)$,
with $k$ selected sigma-subalgebras ${\frak A_1},\dots,{\frak A_k}$ in $\frak A$.
It is natural to suppose that those subalgebras generate the whole sigma-algebra $\frak A$.

The connection with general viewpoint is the following: in the space
${\cal X}=\prod_{i=1}^k (X_i,{\frak A_i},\mu_i)$, $m=\prod \mu_i$,
identify algebras ${\frak A_i}$ with subalgebras of
${\frak A}=\prod {\frak A}_i$ by multiplying to trivial
subalgebras on other multiples. In other words, function $f(x_1,\dots,x_k)$ on ${\cal X}$
is ${\frak A}_i$-measurable iff $f$ depends only on $i$-th variable $x_i$ ($i=1, \dots, k$).

 \begin{defe}
Measurable function on the standard space ${\cal X}$ with $k$ selected subalgebras, which generate
the whole sigma-algebra, is called \it{general measurable function of $k$ variables.}
\end{defe}

  Consider some measure $\lambda$ on the sigma-algebra $\frak A$.
 It may be restricted onto sigma-subalgebras ${\frak A}_i$, $i=1,\dots,k$.
Let's consider such measures $\lambda$ for which those restrictions are
absolutely continuous with respect to the measure $m$ (restricted onto ${\frak A}_i$).
  If restrictions of  $\lambda$ onto ${\frak A}_i$ coincide with $m$,
	$i=1 \dots k$, such a measure $\lambda$ is called  {\it multistochastic} with respect to given subalgebras
	(bistochastic for  $k=2$); if restrictions are just equivalent to $m$ for $i=1, \dots,k$,
	we call $\lambda$  {\it almost multistochastic}. Finally, if
  $\lambda(U)\leq m(U)$ for any $U\in {\frak A}_i,i=1,\dots,k$, we call $\lambda$
	{\it submultistochastic}.

Of course, bistochastic measure on
$X\times Y$ may be singular with respect to the product measure. For instance, in the case of direct product
of segments $(X,\mu)=(Y,\nu)=[0,1]$ there is a bistochastic measure $\lambda$ on diagonal $\{x=y\}$ (with density $d\mu(x)$).

 Furthermore we suppose for simplicity that $k=2$, i.e. consider functions of two variables.
But there is no serious difference for $k>2$. We consider not only independent variables, most of the notions
may be defined for general pair of sigma-algebras. But even the case of independent variables is often useful
to treat as a general case.

Bistochastic measure on the direct product of spaces define the so called
 \emph{polymorphism} of the space $(X,\mu)$ into  $(Y,\nu)$ (see \cite{V5}),
i.e. ``multivalued mapping'' with invariant measure.
  The case of identified variables $(X,{\frak A},\mu) = (Y,{\frak B},\nu)$ is of special interest: polymorphism
	in this case 	generalizes the concept of automorphism of measure space.
	Almost bistochastic measures defines a  {\it polymorphism with quasi invariant measure}.
	Bistochastic or almost bistochastic measure $\lambda$ defines also a bilinear
  (in general case $k$-linear) form $(f(x),g(y))\rightarrow \int f(x)g(y)d\lambda(x,y)$,
	corresponding to the so called Markovian, resp. quasi Markovian operator in corresponding functional
	spaces. Note that this operator $U_{\lambda}$ is a contraction, i.e. has norm at most 1, which preserves the cone of non-negative
	functions. In the case of bistochastic measure this operator (as well as adjoint operator) preserve constants: $U_{\lambda}1=1$.
	
  See \cite{V5,V6,V7} about many connections of polymorphisms  (Markovian operators, joinings, couplings, correspondences, Young measures, bibundles etc).
	Bistochastic measures play a key role in the intensively developing theory of continuous graphs \cite{L}.

\subsection{Further properties of virtually continuous functions}

First of all, virtually continuous functions enjoy
the properties a priori stronger than required in the definition.
On the one side, we may require for sets $X',Y'$ from the definition to have full measure:

\begin{theorem}\label{polnayamera} Let function $f(\cdot,\cdot)$ be virtually continuous.
Then there exist sets $X'\subset X$, $Y'\subset Y$ of full measure and admissible semimetrics
$\rho_X,\rho_Y$ on $X',Y'$ respectively such that
 $f$ is continuous on $(X'\times Y',\rho_X\times \rho_Y)$.
 \end{theorem}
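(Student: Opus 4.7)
The plan is to combine the semimetrics delivered by virtual continuity at finer and finer scales into a single pair of admissible semimetrics, using an auxiliary ``collapse'' construction so that all the ingredient semimetrics live on one ambient space. First I would apply the definition with $\varepsilon_n = 2^{-n}$ to obtain, for each $n \geq 1$, measurable sets $X_n \subset X$, $Y_n \subset Y$ with $\mu(X_n), \nu(Y_n) > 1 - 2^{-n}$, together with admissible semimetrics $\rho_X^n$ on $X_n$ and $\rho_Y^n$ on $Y_n$ such that $f$ is continuous on $(X_n \times Y_n, \rho_X^n \times \rho_Y^n)$. The Borel--Cantelli lemma then ensures that $X' := \liminf_n X_n$ and $Y' := \liminf_n Y_n$ have full measure, and each point of $X'$ lies in $X_n$ for all sufficiently large $n$.

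Next I would extend $\rho_X^n$ to a bounded measurable pseudometric $d_n$ on all of $X$ by collapsing $X \setminus X_n$ to a single auxiliary point $*_n$ and capping the distance at $1$: set $d_n(x,y) = \min(1, \rho_X^n(x,y))$ when $x, y \in X_n$; $d_n(x, y) = 1$ when exactly one of $x, y$ lies in $X_n$; and $d_n(x, y) = 0$ otherwise. Since $d_n$ is the pullback via the natural measurable map $X \to X_n \cup \{*_n\}$ of a genuine pseudometric on the target, it is itself a measurable pseudometric on $X$. Now define $\rho_X := \sum_n 2^{-n} d_n$ and build $\rho_Y := \sum_n 2^{-n} e_n$ analogously from the $\rho_Y^n$; both series converge uniformly, so $\rho_X$ and $\rho_Y$ are measurable and bounded.

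The main delicate point will be verifying that $\rho_X|_{X'}$ is admissible, that is, separable on the full-measure set $X'$. I would prove this using the isometric embedding $\Phi: X' \to \prod_n (X_n \cup \{*_n\})$ sending $x$ to the sequence whose $n$-th coordinate equals $x$ when $x \in X_n$ and $*_n$ otherwise; equipped with the product pseudometric $\sum_n 2^{-n} d_n^*$ this is indeed an isometry, and a countable product of separable pseudometric spaces is separable, so $\Phi(X')$---and therefore $(X', \rho_X)$---is separable. This is exactly where a naive approach (picking countable dense subsets inside each $X_n$ and taking their union) would break down, because such approximants need not simultaneously control the other components of $\rho_X$; the product embedding sidesteps the issue in one stroke.

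Finally, for continuity, fix $(x_0, y_0) \in X' \times Y'$ and choose $N$ so large that $x_0 \in X_n$ and $y_0 \in Y_n$ for all $n \geq N$. If $\rho_X(x_k, x_0) \to 0$ and $\rho_Y(y_k, y_0) \to 0$, then in particular $d_N(x_k, x_0) \to 0$; since $x_0 \in X_N$ forces $d_N(\cdot, x_0) \geq 1$ off $X_N$, we must have $x_k \in X_N$ eventually, whence $\rho_X^N(x_k, x_0) \to 0$, and the same reasoning gives $y_k \in Y_N$ eventually with $\rho_Y^N(y_k, y_0) \to 0$. The assumed continuity of $f$ on $(X_N \times Y_N, \rho_X^N \times \rho_Y^N)$ then yields $f(x_k, y_k) \to f(x_0, y_0)$, which is the desired continuity of $f$ on $(X' \times Y', \rho_X \times \rho_Y)$.
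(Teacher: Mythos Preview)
Your argument is correct. The paper states Theorem~\ref{polnayamera} without proof (deferring details to another paper), so there is no original proof to compare against; your Borel--Cantelli plus weighted-sum-of-collapsed-metrics construction is exactly the natural approach and all steps go through as written.

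One small remark: admissibility of $\rho_X^n$ on $X_n$ guarantees separability only on a full-measure subset of $X_n$, not necessarily on all of $X_n$. You should therefore shrink each $X_n$ to that separable subset at the outset; this costs no measure and the rest of the argument (the product embedding for separability of $(X',\rho_X)$ and the continuity step) is unaffected.
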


 On the other hand, we may fix arbitrary admissible metrics:

\begin{theorem}\label{arbitrarymetrics}
Let function $f(\cdot,\cdot)$ be virtually continuous.
Then for any admissible semimetrics $\rho_X,\rho_Y$ on $X',Y'$ and for any $\varepsilon  >0$ there exist sets $X'\subset X,  Y'\subset Y$,
each of which having measure $1-\eps$, such that function  $f$ is continuous on  $(X'\times Y',\rho_X \times \rho_Y)$.
 \end{theorem}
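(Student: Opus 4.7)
The plan is to reduce to Lemma \ref{equivalence} by first invoking the previous theorem (Theorem \ref{polnayamera}) to replace the ``for some $\varepsilon$''-semimetrics witnessing virtual continuity by semimetrics defined almost everywhere on $X$ and $Y$. Concretely, by Theorem \ref{polnayamera} choose admissible semimetrics $\tilde\rho_X$, $\tilde\rho_Y$ defined on full-measure subsets $X_0\subset X$, $Y_0\subset Y$, such that $f$ is continuous on $(X_0\times Y_0,\tilde\rho_X\times\tilde\rho_Y)$. After restricting further (to a still full-measure subset), we may assume that the given admissible metrics $\rho_X,\rho_Y$ are admissible metrics on $X_0$ and $Y_0$ respectively.

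Next, apply Lemma \ref{equivalence} on $X_0$ with $\rho_1=\rho_X$ (which is an admissible metric) and $\rho_2=\tilde\rho_X$ (an admissible semimetric): this yields a measurable set $K_X\subset X_0$ with $\mu(K_X)>1-\varepsilon$ on which $\tilde\rho_X$, seen as a function of two variables, is continuous with respect to $\rho_X$. In particular, for any sequence $x_n\in K_X$ with $\rho_X(x_n,x)\to 0$ and $x\in K_X$, we have
\[
\tilde\rho_X(x_n,x)\to \tilde\rho_X(x,x)=0,
\]
so $\rho_X$-convergence in $K_X$ implies $\tilde\rho_X$-convergence. Repeat the construction on $Y$ to obtain $K_Y\subset Y_0$ with $\nu(K_Y)>1-\varepsilon$ enjoying the analogous property.

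Finally, verify that $f$ is continuous on $(K_X\times K_Y,\rho_X\times\rho_Y)$. If $(x_n,y_n)\to(x,y)$ in $\rho_X\times\rho_Y$ with all points in $K_X\times K_Y$, the previous step gives $x_n\to x$ in $\tilde\rho_X$ and $y_n\to y$ in $\tilde\rho_Y$, hence $(x_n,y_n)\to(x,y)$ in $\tilde\rho_X\times\tilde\rho_Y$, so by the initial choice $f(x_n,y_n)\to f(x,y)$. Setting $X':=K_X$, $Y':=K_Y$ finishes the proof.

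The conceptual content is entirely in Lemma \ref{equivalence}: it is what converts the abstract measure-theoretic witness of virtual continuity into something compatible with a prescribed admissible metric. The only minor technical point to be careful about is distinguishing ``continuity as a function of two variables'' from ``continuity of a metric at a single point'': continuity of $\tilde\rho_X$ on $K_X\times K_X$ with respect to $\rho_X$ is used precisely to deduce that $\rho_X$-convergent sequences are $\tilde\rho_X$-convergent, which is exactly the implication needed to transport continuity of $f$ from the one product metric to the other.
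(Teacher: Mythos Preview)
The paper does not actually supply a proof of Theorem \ref{arbitrarymetrics}; it states the result and defers details to the announced follow-up paper. Your argument is the natural one given the tools the paper does develop, and it is essentially correct: invoke Theorem \ref{polnayamera} to obtain witnessing semimetrics $\tilde\rho_X,\tilde\rho_Y$ on sets of full measure, then use Lemma \ref{equivalence} (or its Corollary \ref{coincidence}) to pass to large subsets on which $\rho_X$- and $\rho_Y$-convergence force $\tilde\rho_X$- and $\tilde\rho_Y$-convergence, and transport continuity of $f$.

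One small point to tidy up: Lemma \ref{equivalence} is stated with the hypothesis that $\rho_1$ is a \emph{metric}, whereas Theorem \ref{arbitrarymetrics} allows the prescribed $\rho_X,\rho_Y$ to be merely admissible \emph{semimetrics}. You silently upgrade $\rho_X$ to a metric when you invoke the lemma. This is harmless --- the proof sketch of Lemma \ref{equivalence} (take $K$ compact for $\rho_1+\rho_2$) works verbatim when $\rho_1$ is a semimetric, and alternatively one can add any fixed admissible metric to $\rho_X$ before applying the lemma and then note that continuity with respect to the sum implies continuity with respect to $\rho_X$ alone --- but you should say a word about it rather than asserting that $\rho_X$ ``is an admissible metric''.
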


A function of two variables on $X\times Y$ may be treated as a map from $X$ into the space of functions on $Y$
(i.e. $f(x,y)\equiv f_x(y)$). In \cite{VS} such a viewpoint is used for classification problem.
Virtual continuity is described in those terms by the following equivalent definition:

\begin{theorem}\label{compactness} Virtual continuity of the function $f(\cdot,\cdot)$
is equivalent to the following property of a function: for any $\eps>0$ there exist
sets $X'\subset X$,  $Y'\subset Y$ having measures not less then $1-\eps$,  such that the set of functions
$f_x(\cdot)$ on $Y'$ (variable $x$ runs over $X'$) form a totally bounded (precompact) family in $L_{\infty}(Y')$.
\end{theorem}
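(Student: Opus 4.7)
The plan is to prove the two directions separately; the reverse direction is the delicate one.

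\textbf{Forward direction.} Given $\eps>0$, virtual continuity gives $X_0,Y_0$ of measure $\geq 1-\eps/2$ and admissible semimetrics $\rho_X,\rho_Y$ with $f$ continuous on $(X_0\times Y_0,\rho_X\times\rho_Y)$. By the inner-regularity proposition following the admissibility definition, shrink to compact-in-$\rho_X$ (resp.\ $\rho_Y$) subsets $X'\subset X_0$, $Y'\subset Y_0$ of measure $\geq 1-\eps$. On the compact product $f$ is uniformly continuous, hence $\rho_X(x_1,x_2)<\eta$ forces $\sup_{y\in Y'}|f(x_1,y)-f(x_2,y)|<\delta$; this says $x\mapsto f_x|_{Y'}$ is a uniformly continuous map from the compact $(X',\rho_X)$ into $L_\infty(Y')$, whose image is therefore totally bounded.

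\textbf{Reverse direction.} Given $\eps>0$ and the totally-bounded hypothesis with $X',Y'$ of measure $\geq 1-\eps/2$, pick a countable sequence $g_n:=f_{x_n}$ whose classes are dense in $\{[f_x]\}_{x\in X'}\subset L_\infty(Y')$, with bounded measurable representatives. Fix any admissible metric $\rho_Y$ on $Y$ (exists since $Y$ is standard). By the generalized Luzin corollary, each $g_n$ is $\rho_Y$-continuous on a subset $Y'_n$ of measure $\geq \nu(Y')-\eps/2^{n+2}$; set $Y'':=\bigcap_n Y'_n$, so every $g_n$ is $\rho_Y$-continuous on $Y''$ and $\nu(Y'')>1-\eps$. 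Replacing $Y''$ by the $\rho_Y$-support of $\nu|_{Y''}$ loses no measure and guarantees that for $\rho_Y$-continuous functions on $Y''$ the essential supremum equals the pointwise supremum. Put
\[
\rho_X(x_1,x_2):=\|f_{x_1}-f_{x_2}\|_{L_\infty(Y')},
\]
admissible by total boundedness (separability) and by the identity $\|\cdot\|_\infty=\lim_{p\to\infty}\|\cdot\|_{L_p}$ combined with Fubini (measurability).

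For each $x\in X'$ pick $n_k(x)$ with $\|f_x-g_{n_k(x)}\|_\infty\leq 2^{-k}$; then $(g_{n_k(x)})_k$ is Cauchy in the sup-norm on $Y''$ (since sup equals ess sup on $Y''$ for continuous functions), so it converges uniformly to a $\rho_Y$-continuous function $\tilde f(x,\cdot)$ equal to $f_x$ almost everywhere. Joint continuity of $\tilde f$ at $(x_0,y_0)\in X'\times Y''$ now follows from
\[
|\tilde f(x,y)-\tilde f(x_0,y_0)|\leq \sup_{y\in Y''}|\tilde f(x,y)-\tilde f(x_0,y)|+|\tilde f(x_0,y)-\tilde f(x_0,y_0)|,
\]
where the first summand equals $\|\tilde f(x,\cdot)-\tilde f(x_0,\cdot)\|_{L_\infty(Y'')}\leq\rho_X(x,x_0)$, which tends to $0$ as $x\to x_0$ in $\rho_X$, and the second tends to $0$ as $y\to y_0$ in $\rho_Y$ by continuity of the single function $\tilde f(x_0,\cdot)$. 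This exhibits $f$ as virtually continuous with witnesses $X',Y''$ and admissible semimetrics $\rho_X,\rho_Y$.

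\textbf{Main obstacle.} The technical core is the reverse direction: passing from the $L_\infty$-totally-boundedness hypothesis, which is an almost-everywhere statement, to pointwise joint continuity. The resolution is a two-step construction, namely applying Luzin to a countable dense subfamily $\{g_n\}$ to obtain simultaneous $\rho_Y$-continuity on a common subset, and then lifting the $L_\infty$-approximation $g_{n_k(x)}\to f_x$ to genuine uniform pointwise convergence on the $\rho_Y$-support to produce a canonical continuous representative $\tilde f(x,\cdot)$ of each slice. Once this pointwise representative is in hand, the joint-continuity estimate is a routine triangle inequality.
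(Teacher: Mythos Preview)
The paper states this theorem without proof; the abstract explicitly announces that ``detailed analysis is to be presented in another paper,'' and indeed no argument for Theorem~\ref{compactness} appears in the body. So there is nothing in the paper to compare your argument against.

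Evaluated on its own merits, your proof is essentially correct. The forward direction is the expected uniform-continuity-on-a-compact-product argument. For the reverse direction your strategy is sound: pull back the $L_\infty(Y')$-distance to obtain the admissible semimetric $\rho_X$ (measurability via $\|\cdot\|_\infty=\lim_p\|\cdot\|_p$ and Fubini is valid because total boundedness in $L_\infty$ forces uniform boundedness), apply Luzin to a countable dense subfamily $\{g_n\}$ to produce $Y''$, pass to the support so that sup and ess~sup agree for continuous functions, and then upgrade the a.e.\ approximation $g_{n_k(x)}\to f_x$ to genuine uniform convergence on $Y''$.

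Two small points are worth making explicit, though neither is a real gap. First, before selecting the $x_n$ you should discard from $X'$ the Fubini-null set of $x$ for which the slice $f_x$ fails to be a well-defined measurable function on $Y'$. Second, the continuous representative $\tilde f(x,\cdot)$ is independent of the chosen subsequence $n_k(x)$ because two $\rho_Y$-continuous functions on the support that agree $\nu$-a.e.\ must agree everywhere; stating this removes any apparent choice-dependence in the definition of $\tilde f$. With these clarifications your argument goes through.
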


This theorem-definition has an important corollary:
continuity in one variable implies virtual continuity.

\begin{Lm}\label{razd}
Let $(X,\mu)$, $(Y,\nu)$
be standard continuous probabilistic spaces, $\rho_Y$ be an admissible metric on the set $Y'$ of full
measure. Let measurable function $f:X\times Y \to \mathbb{R}$ be so that functions $f(x, \cdot)$ are continuous on $(Y', \rho_Y)$
for $\mu$-almost all $x\in X$. Then $f$ is virtually continuous.
\end{Lm}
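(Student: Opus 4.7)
The plan is to verify the totally-bounded criterion of Theorem \ref{compactness}, by regarding $x \mapsto f_x$ as a measurable map into a separable Banach space and then applying Luzin-type arguments. Fix $\eps > 0$. Since $\rho_Y$ is admissible on $Y'$ and $\nu$ is inner regular in $\rho_Y$ (Proposition 1), choose a compact set $K \subset Y'$ with $\nu(K) > 1-\eps$. The space $C(K)$ (with sup norm) is then a separable Banach space, and for $\mu$-a.e.\ $x \in X$ the restriction $F(x) := f(x,\cdot)|_K$ lies in $C(K)$ because $f(x,\cdot)$ is continuous on $(Y',\rho_Y) \supset K$.

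First I would check that $F$ is measurable into $C(K)$. Fix a countable dense set $D \subset K$. Joint measurability of $f$ gives measurability of $x\mapsto f(x,y)$ for every $y$, and continuity of $f(x,\cdot)$ on $K$ (for a.e.\ $x$) turns sup over $K$ into sup over $D$. Hence $x \mapsto \|F(x)\|_\infty = \sup_{y\in D}|f(x,y)|$ and $(x_1,x_2)\mapsto \|F(x_1)-F(x_2)\|_\infty = \sup_{y\in D}|f(x_1,y)-f(x_2,y)|$ are measurable. In particular, choosing $M$ large enough, the set $X_M := \{x : \|F(x)\|_\infty \le M\}$ has $\mu(X_M) > 1-\eps/2$, and on $X_M$ the pullback semimetric $\rho_X(x_1,x_2) := \|F(x_1) - F(x_2)\|_\infty$ is measurable in two variables; it is separable because $F(X_M)$ sits in the separable space $C(K)$. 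Thus $\rho_X$ is an admissible semimetric on $X_M$.

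Now I would extract total boundedness. By Proposition 1 applied to $(X_M, \mu, \rho_X)$, the measure $\mu$ is inner regular with respect to $\rho_X$, so there is a set $X' \subset X_M$ with $\mu(X') > 1-\eps$ which is compact in the $\rho_X$-topology. By construction $F : (X',\rho_X) \to C(K)$ is an isometric embedding, so $F(X')$ is compact, hence totally bounded, in $C(K)$. Since for $x \in X'$ the function $f_x$ is continuous on $K$, total boundedness in $C(K)$ coincides with total boundedness of $\{f_x|_K : x\in X'\}$ in $L_\infty(K)$. Taking $Y' := K$, Theorem \ref{compactness} yields virtual continuity of $f$.

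The only delicate step is the Banach-space measurability of $F$: one needs the joint measurability of $f$ together with sectional continuity to reduce suprema over $K$ to suprema over a countable dense set. Once this is in place, the rest is inner regularity plus the isometry trick to convert $\rho_X$-compactness of $X'$ into norm-compactness of $F(X')$.
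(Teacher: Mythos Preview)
Your proposal is correct and follows exactly the route the paper indicates: the paper presents Lemma~\ref{razd} as a corollary of the precompactness criterion Theorem~\ref{compactness} without giving details, and your argument (restrict to a $\rho_Y$-compact $K\subset Y'$, view $x\mapsto f(x,\cdot)|_K$ as a measurable map into the separable space $C(K)$, then use inner regularity on $X$ to extract a set on which the image is compact) is precisely how one fills in those details. The only cosmetic point is that Proposition~1 is stated for admissible \emph{metrics} while your $\rho_X$ is a semimetric; this is harmless since you can add any fixed admissible metric on $X$ before invoking inner regularity, and a $\rho_X+\rho'$-compact set is a fortiori $\rho_X$-totally bounded.
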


Theorem \ref{polnayamera} immediately implies the converse: for any virtually continuous function $f$
such a metric $\rho_Y$ exists. So, the statement of Lemma \ref{razd}
(``continuity by appropriate metric on $y$ for almost all fixed $x$'') is equivalent to virtual continuity.

It's remarkable that the spaces $X$ and $Y$ (i.e. arguments of the function) play different roles in this definition.
However, a posteriori the property appears to be symmetric under the change of order of variables.
This is another demonstration of the non-triviality of the virtual continuity concept.

As we have seen, measurable functions $f(\cdot,\cdot)$ are classified by matrix distributions, i.e. by measures
on the space of matrices $(a_{ij})_{i,j=1}^\infty$, induced by the map $f\rightarrow (a_{i,j}=f(x_i,y_j))$ (points $x_i$ in $X$
and $y_i$ in $Y$, $i=1,2,\dots$ are chosen independently). Virtual continuity also may be characterized on this manner:

\begin{theorem} Let $x_1, x_2, \dots$ (resp. $y_1, y_2, \dots$) be independent random points in $X$ (resp. in $Y$).
Virtual continuity of the measurable function $f(x,y)$ is equivalent to the following condition: for any $\varepsilon >0$
there exist positive integer $N$ such that the following event has probability 1:

 there exist such a partitions of naturals
 $\{1, 2, \dots,\}=\sqcup_{i=0}^N A_i=\sqcup_{i=0}^N B_i$ that
 upper density of the set $A_0\cup B_0$ is less than $\eps$ (i.e. $\limsup |(A_0\cup B_0)\cap [1,n]|/n<\eps$) and
 $|f(x_s,y_t)-f(x_r,y_p)|<\varepsilon $  for all $i,j>0$, $s,r\in A_i$, $p,t\in B_j$.
\end{theorem}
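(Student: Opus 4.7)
The plan is to prove both directions separately; the forward implication is essentially routine, and the reverse one is where the real work lies.

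\textbf{Virtual continuity implies the combinatorial condition.} I would first apply Theorem~\ref{polnayamera} to obtain full-measure subsets $X' \subset X$, $Y' \subset Y$ and admissible semimetrics $\rho_X, \rho_Y$ with respect to which $f$ is continuous on $(X' \times Y', \rho_X \times \rho_Y)$. Inner regularity of $\mu,\nu$ supplies compact sets $K_X \subset X'$, $K_Y \subset Y'$ of measure $> 1 - \varepsilon/2$; continuity of $f$ on the compact product $K_X \times K_Y$ is automatically uniform, so there is $\delta > 0$ with $\rho_X \times \rho_Y$-balls of radius $\delta$ having $f$-oscillation below $\varepsilon$. Covering $K_X, K_Y$ by finitely many $\delta$-balls yields partitions $K_X = \bigsqcup_{i=1}^N X_i$, $K_Y = \bigsqcup_{j=1}^N Y_j$ (pad with empty classes to equalise the number of parts); set $X_0 = X \setminus K_X$, $Y_0 = Y \setminus K_Y$ and $A_i = \{k : x_k \in X_i\}$, $B_j = \{k : y_k \in Y_j\}$. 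The strong law of large numbers gives $|A_0 \cap [1,n]|/n \to \mu(X_0)$ and $|B_0 \cap [1,n]|/n \to \nu(Y_0)$ almost surely, hence $A_0 \cup B_0$ has upper density strictly less than $\varepsilon$ a.s. For $i, j > 0$, $s, r \in A_i$ and $p, t \in B_j$, both $(x_s, y_t)$ and $(x_r, y_p)$ belong to $X_i \times Y_j$, a set of product diameter $< 2\delta$, so $|f(x_s, y_t) - f(x_r, y_p)| < \varepsilon$.

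\textbf{Combinatorial condition implies virtual continuity.} Via Theorem~\ref{compactness} the goal reduces to producing, for each $\varepsilon$, sets $X' \subset X$, $Y' \subset Y$ with $\mu(X'), \nu(Y') > 1 - \varepsilon$ such that $\{f_x|_{Y'} : x \in X'\}$ is totally bounded in $L_{\infty}(Y')$. Feeding the hypothesis with a threshold $\varepsilon' \ll \varepsilon$ yields an integer $N$. The morally correct picture is that the random matrix $(f(x_i,y_j))$ becomes, after deleting a set of rows and columns of upper density $< \varepsilon'$, an $N \times N$-block constant matrix up to error $\varepsilon'$; this should be promoted to a \emph{measurable} partition $X = X_0 \sqcup X_1 \sqcup \cdots \sqcup X_N$, $Y = Y_0 \sqcup \cdots \sqcup Y_N$ with $\mu(X_0), \nu(Y_0) < \varepsilon$ and constants $c_{ij}$ satisfying $|f(x,y) - c_{ij}| < \varepsilon$ on $X_i \times Y_j$ for $i, j > 0$. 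Once this step-function approximation is in hand, the family $\{f_x|_{Y \setminus Y_0}\}_{x \in X \setminus X_0}$ is contained in a $2\varepsilon$-neighbourhood of $N$ points of $L_{\infty}(Y \setminus Y_0)$ and is therefore totally bounded.

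\textbf{Where the difficulty sits.} The delicate step is manufacturing the measurable partitions of $X$ and $Y$ from the almost sure combinatorial partitions of $\mathbb{N}$: the label assigned to index $k$ a priori depends on the whole random sample, not on $x_k$ alone. The plan is to exploit the exchangeability of the sampling (the condition is $S_\infty$-invariant), combined with the Glivenko--Cantelli fact that the empirical measure of $(x_i)$ converges to $\mu$, to argue that the labeling factors---modulo permutations of the cluster indices across realizations and a negligible reshuffling---through the single-coordinate $\sigma$-algebra. Matching cluster indices coherently between different realizations, and controlling the $L_{\infty}$-cluster radii uniformly over realizations, is the subtlety that I expect to account for the bulk of a full proof.
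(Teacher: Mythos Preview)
The paper does not prove this theorem; like most results in this announcement, it is stated without argument (the abstract says ``Detailed analysis is to be presented in another paper''). So there is nothing in the paper to compare your proposal against, and the assessment below is on the merits of the proposal itself.

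Your forward implication is correct and essentially complete: Theorem~\ref{polnayamera}, inner regularity, uniform continuity on the compact product, a finite $\delta$-cover, and the strong law give exactly the partitions of $\mathbb{N}$ as pull-backs of measurable partitions of $X$ and $Y$, with the density estimate falling out.

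For the reverse direction you have correctly isolated the only real obstacle and are honest that you have not closed it: the random partitions $A_i,B_j$ of $\mathbb{N}$ depend on the entire sample, and one must extract from them \emph{measurable} partitions of $X$ and $Y$. Your exchangeability/de~Finetti idea is pointed in the right direction but is still only a plan. One way to make it concrete and bypass abstract exchangeability is a two-sample (``reference sample'') argument: draw independent samples $(x_i),(y_j)$ and $(x'_i),(y'_j)$, interlace them, and apply the hypothesis to the merged sample. Freezing one good realisation of the primed sample, each unprimed point $x$ can then be assigned, by a measurable rule depending only on the finite data $\big(f(x,y'_t)\big)_t$, to the primed cluster whose representative it $\varepsilon$-matches on the primed $y'$-blocks (or to the exceptional class $0$ if no match occurs). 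This yields a genuine measurable partition of $X$; the same for $Y$. From there your step-function approximation and the total-boundedness criterion of Theorem~\ref{compactness} (or, equivalently, the $\tau$-closure characterisation of Theorem~\ref{tau-virtual}) finish the job. Writing this reference-sample step out carefully---in particular checking that the exceptional class has measure $<\varepsilon$---is what your sketch still lacks.
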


Aforementioned characteristics of the virtual continuity allow to deduce that virtual continuous functions form
a nowhere dense set in the space of all measurable functions (with measure convergence topology).

 \subsection{Thickness}
Consider the space $X\times Y$ with product measure $\mu \times \nu$.
Choose two subalgebras in its sigma-algebra, defined by projections onto $X$ and $Y$.
For measurable set $Z\subset X\times Y$ define its \emph{thickness} $\thi(Z)$ as the infimum of the value
$\mu(X_1)+\nu(Y_1)$ taken by all pairs of measurable sets $X_1\subset X,Y_1\subset Y$ such that
\begin{equation}\label{thickness}
\mu\times \nu (Z\setminus(X_1\times Y\cup X\times Y_1))=0.
\end{equation}

Sets of the form $X_1\times Y$, $X\times Y_1$ are exactly sets from our sigma-subalgebras. It allows
to define generalized thickness for other selected subalgebras in the standard space.

The following properties of thickness are immediate:
\begin{itemize}
\item thickness of a set does not exceed 1 and equals 0 for and only for sets of measure 0;
\item thickness of a subset does not exceed a thickness of a set;
\item thickness of a set does not exceed its measure;
\item thickness of a finite or countable union of sets does not exceed sum of thicknesses.
\end{itemize}

The following lemma is slightly less obvious.

\begin{lemma}\label{thick}
If $\thi(Z)=0$, then $X_1,Y_1$ of zero measure may be chosen in the definition \ref{thickness}.
\end{lemma}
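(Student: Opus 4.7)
My plan is to reduce the lemma to the observation that the hypothesis $\thi(Z)=0$ already forces $Z$ itself to be $\mu\times\nu$-null, after which the trivial choice $X_1=Y_1=\emptyset$ attains the infimum.

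The first step is to prove the inequality $\mu\times\nu(Z)\leq\thi(Z)$. I would fix any pair $(X_1,Y_1)$ admissible in the definition of thickness. The defining relation (\ref{thickness}) says exactly that $Z$ is contained, up to a $\mu\times\nu$-null set, in $(X_1\times Y)\cup(X\times Y_1)$. Since $\mu$ and $\nu$ are probability measures, $\mu\times\nu(X_1\times Y)=\mu(X_1)$ and $\mu\times\nu(X\times Y_1)=\nu(Y_1)$, so
$$\mu\times\nu(Z)\leq \mu(X_1)+\nu(Y_1).$$
Taking the infimum over admissible pairs yields $\mu\times\nu(Z)\leq\thi(Z)$.

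Combining this with the hypothesis $\thi(Z)=0$ gives $\mu\times\nu(Z)=0$. At that point I would simply set $X_1=Y_1=\emptyset$. These are measurable with $\mu(X_1)=\nu(Y_1)=0$, and the residue $Z\setminus\bigl((X_1\times Y)\cup(X\times Y_1)\bigr)=Z$ is $\mu\times\nu$-null by the previous step. So $(\emptyset,\emptyset)$ is a valid pair in the definition of thickness consisting of zero-measure sets, which is exactly the conclusion.

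The only trap I can foresee is being tempted into a more elaborate construction — for instance, extracting a limiting pair from a minimizing sequence $(X_1^{(n)},Y_1^{(n)})$ with $\mu(X_1^{(n)})+\nu(Y_1^{(n)})\to 0$ via some Borel--Cantelli-type intersection. This route is unnecessary because the simple inequality $\mu\times\nu(Z)\le\mu(X_1)+\nu(Y_1)$ collapses the problem at once; the main task is really just recognizing that the obvious comparison with the product measure is what does the work.
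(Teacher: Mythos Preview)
Your argument is correct. The paper does not actually supply a proof of this lemma; it only calls it ``slightly less obvious'' than the list of immediate properties preceding it. Note, however, that the very first of those immediate properties already asserts that $\thi(Z)=0$ if and only if $\mu\times\nu(Z)=0$, which is exactly the inequality $\mu\times\nu(Z)\le\thi(Z)$ you establish. So your proof is in complete agreement with what the paper takes for granted, and you have simply made explicit the one-line estimate $\mu\times\nu(Z)\le\mu(X_1)+\nu(Y_1)$ that underlies it; once that is in hand, the choice $X_1=Y_1=\emptyset$ is indeed all that is needed.
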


By ``arbitrarily thin set'' we mean ``the set of arbitrarily small thickness''.
It allows to reformulate lemma \ref{thick} as follows:
if the set may covered may arbitrarily thin set, then its complement contains  mod 0 the product
of full measure sets.

the following equivalent definition of thickness is in some situations more appropriate for
using:
\begin{lemma}\label{thick2}
For any set $Z$ consider pairs of measurable functions $f:X \to [0,1]$, $g:Y\to [0,1]$,
such that $f(x)+g(y)\geq \chi_Z(x,y)$ for $\mu\times \nu$-almost all pairs $(x,y)$.
Then thickness of $Z$ is infimum of the sum of integrals $\int_X f d\mu$ and $\int_Y g d\nu$.
\end{lemma}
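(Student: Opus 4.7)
The plan is to prove the two inequalities between $\thi(Z)$ and the proposed infimum separately.

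First, the easy direction: given any admissible pair $X_1\subset X$, $Y_1\subset Y$ from the original definition (so that $\mu\times\nu(Z\setminus(X_1\times Y\cup X\times Y_1))=0$), set $f=\chi_{X_1}$ and $g=\chi_{Y_1}$. Whenever $(x,y)\in Z$ lies outside the null exceptional set, either $x\in X_1$ or $y\in Y_1$, so $f(x)+g(y)\geq 1=\chi_Z(x,y)$. Then $\int f\,d\mu+\int g\,d\nu=\mu(X_1)+\nu(Y_1)$, which shows that the proposed infimum is at most $\thi(Z)$.

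For the reverse inequality, the key is a level-set argument with averaging over the threshold. Given any admissible pair $(f,g)$ in the new definition, I would define, for each $t\in(0,1)$,
\[
X_t=\{x\in X\colon f(x)>t\},\qquad Y_t=\{y\in Y\colon g(y)\geq 1-t\}.
\]
If $(x,y)\in Z$ lies in the full-measure set where $f(x)+g(y)\geq 1$ and $x\notin X_t$, then $f(x)\leq t$, forcing $g(y)\geq 1-f(x)\geq 1-t$, so $y\in Y_t$. Hence the identity (\ref{thickness}) holds for every $t\in(0,1)$ with this choice of $X_1=X_t$, $Y_1=Y_t$, and consequently $\thi(Z)\leq\mu(X_t)+\nu(Y_t)$ for each such $t$.

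The second step is the layer-cake identity: using Fubini (the functions $t\mapsto\mu(X_t)$ and $t\mapsto\nu(Y_t)$ are monotone, hence Borel measurable) and the substitution $s=1-t$,
\[
\int_0^1\bigl(\mu(X_t)+\nu(Y_t)\bigr)\,dt=\int_X f\,d\mu+\int_Y g\,d\nu.
\]
Since the integrand is bounded below by $\thi(Z)$ on the full interval $(0,1)$, the mean value principle (or just non-emptiness of the sublevel set of an integral) yields some $t^{\ast}$ with $\mu(X_{t^{\ast}})+\nu(Y_{t^{\ast}})\leq\int f\,d\mu+\int g\,d\nu$, and therefore $\thi(Z)\leq\int f\,d\mu+\int g\,d\nu$. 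Taking the infimum over all admissible $(f,g)$ closes the loop.

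The only subtle point — and the one I would anticipate as the main obstacle — is matching strict and non-strict inequalities in the definitions of $X_t$ and $Y_t$. If both were strict, the equality case $f(x)+g(y)=1$ could slip through and the covering property would fail; the asymmetric choice above (strict for $X_t$, non-strict for $Y_t$) avoids this, while the boundary values $t=0$ and $t=1$ contribute nothing to the Lebesgue integral in $t$, so the layer-cake formula remains clean. Everything else is a routine application of Fubini and monotone rearrangement of level sets.
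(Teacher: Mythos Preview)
Your proof is correct. The paper is an announcement and does not supply a proof of this lemma (it defers details to a forthcoming paper), so there is nothing to compare against; the level-set averaging argument you give is the natural one and is almost certainly what the authors intend. One minor simplification: once you have $\thi(Z)\le \mu(X_t)+\nu(Y_t)$ for every $t\in(0,1)$, integrating both sides in $t$ over $(0,1)$ already gives $\thi(Z)\le \int_X f\,d\mu+\int_Y g\,d\nu$; there is no need to select a particular $t^\ast$ via a mean-value argument.
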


Applying this lemma and choosing weakly convergent subsequence the lower semicontinuity of
thickness may be proved:
\begin{lemma}
Let $\{Z_n\}$ -be increasing sequence of measurable sets, $Z=\cup_n Z_n$. Then $\thi(Z)=\lim\thi(Z_n)$.
\end{lemma}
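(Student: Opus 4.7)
The plan is to prove the two inequalities separately, with the nontrivial direction relying on a weak compactness argument applied to the characterization in Lemma \ref{thick2}.

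The easy inequality $\lim \thi(Z_n) \le \thi(Z)$ is immediate: since $Z_n \subset Z_{n+1} \subset Z$, the monotonicity of thickness under inclusion shows that $\thi(Z_n)$ is nondecreasing and bounded above by $\thi(Z)$, so the limit exists and does not exceed $\thi(Z)$.

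For the reverse inequality $\thi(Z) \le \lim \thi(Z_n)$, I would use Lemma \ref{thick2} to pick, for each $n$, measurable functions $f_n \colon X \to [0,1]$ and $g_n \colon Y \to [0,1]$ with $f_n(x)+g_n(y) \ge \chi_{Z_n}(x,y)$ for $\mu\times\nu$-a.e. $(x,y)$ and $\int_X f_n\,d\mu + \int_Y g_n\,d\nu \le \thi(Z_n) + 1/n$. Since both sequences are uniformly bounded by $1$, they are weakly precompact in $L_2(X,\mu)$ and $L_2(Y,\nu)$; passing to a common subsequence (diagonal trick), assume $f_{n_k} \to f$ weakly in $L_2(\mu)$ and $g_{n_k} \to g$ weakly in $L_2(\nu)$. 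Because $[0,1]$-valued functions form a convex weakly closed set, $0\le f \le 1$ a.e.\ and $0 \le g \le 1$ a.e.

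The key step is to verify that $f(x)+g(y)\ge \chi_Z(x,y)$ for $\mu\times\nu$-almost every $(x,y)$. For arbitrary measurable $A\subset X$ and $B\subset Y$, weak convergence (tested against $\chi_A$ and $\chi_B$) gives
$$
\int_A\!\!\int_B (f_{n_k}(x)+g_{n_k}(y))\,d\mu\,d\nu = \nu(B)\int_A f_{n_k}\,d\mu + \mu(A)\int_B g_{n_k}\,d\nu \;\longrightarrow\; \int_A\!\!\int_B (f(x)+g(y))\,d\mu\,d\nu,
$$
while monotone convergence (since $Z_n\uparrow Z$) yields $(\mu\times\nu)(Z_{n_k}\cap(A\times B)) \to (\mu\times\nu)(Z\cap(A\times B))$. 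Therefore $\int_{A\times B}(f(x)+g(y)-\chi_Z(x,y))\,d\mu\,d\nu \ge 0$ for every rectangle $A\times B$. Since rectangles generate $\frak A\otimes\frak A$, this extends to all measurable subsets of the product, forcing $f(x)+g(y) \ge \chi_Z(x,y)$ almost everywhere.

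Finally, setting $A=X$ (resp.\ $B=Y$) in the weak convergence gives $\int f\,d\mu = \lim_k \int f_{n_k}\,d\mu$ and $\int g\,d\nu = \lim_k \int g_{n_k}\,d\nu$, so
$$
\int f\,d\mu + \int g\,d\nu = \lim_{k\to\infty}\!\left(\int f_{n_k}\,d\mu + \int g_{n_k}\,d\nu\right) \le \lim_{k\to\infty}\!\left(\thi(Z_{n_k}) + \tfrac{1}{n_k}\right) = \lim_n \thi(Z_n).
$$
Applying Lemma \ref{thick2} to $(f,g)$ yields $\thi(Z) \le \int f\,d\mu + \int g\,d\nu \le \lim_n \thi(Z_n)$, completing the proof. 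The principal obstacle is the passage from weak to pointwise inequality in the limit; this is the reason the proof must go through the integral characterization of Lemma \ref{thick2} rather than attempt to pass directly to the limit in the pointwise inequality $f_n(x)+g_n(y)\ge \chi_{Z_n}(x,y)$, where one cannot in general take limits of non-convergent sequences.
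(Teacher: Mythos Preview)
Your argument is correct and matches the approach the paper indicates: invoke the dual characterization of Lemma~\ref{thick2}, extract a weakly convergent subsequence of the near-optimal pairs $(f_n,g_n)$, and pass to the limit. The only step worth a word of extra care is the passage from $\int_{A\times B}(f+g-\chi_Z)\ge 0$ on rectangles to the a.e.\ inequality; this is standard since $f+g-\chi_Z$ is bounded and finite disjoint unions of rectangles approximate arbitrary measurable sets in measure, so the integral inequality extends to all measurable $E$ and in particular to $E=\{f+g<\chi_Z\}$.
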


Note that there is no upper semicontinuity: all sets $\{(x,y):0<|x-y|<1/n\}\subset [0,1]^2$ have thickness 1, while
their intersection is empty.

Define the convergence of functions ``in thickness'' analogously to convergence in measure. This is convergence
in the metrizable topology defined by the following distance:

\begin{defe}
Define the distance $\tau(f(x,y),g(x,y))$ between arbitrary measurable functions of two variables as infimum of
such $\eps>0$ that $\thi\{(x,y):|f(x,y)-g(x,y)|>\eps\}\leq \eps$.
 \end{defe}

Convergence in this $\tau$-metric implies convergence in measure (but not vice versa).

Let $\xi_X: X=\sqcup_{i=1}^n X_i$, $\xi_Y: Y=\sqcup_{i=1}^m Y_i$ be finite partitions of the spaces $X,Y$ respectively
onto measurable subsets of positive measure.
Functions which are constant mod $0$ on each product $X_i\times Y_j$ are called \emph{step functions}.
Finite linear combinations $\sum_{i=1}^N a_i(x)b_i(y)$ are called \emph{functions of finite rank}.
The set of measurable functions is complete in $\tau$-metric.

The following theorem connects finite rank functions and virtual continuity.

\begin{theorem}\label{tau-virtual}
The $\tau$-closure of the set of step functions (or the set of finite rank functions)
is exactly the set of virtually continuous functions.
\end{theorem}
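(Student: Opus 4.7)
The plan is to establish the chain
\[
\{\text{step}\}\subset\{\text{finite rank}\}\subset \text{VC} \subset \overline{\{\text{step}\}}^{\,\tau},
\]
in which the first two inclusions are already available from the excerpt (step functions are finite-rank indicator combinations, and finite-rank functions are virtually continuous via Luzin applied to each factor) and then to close the loop by showing that the class of virtually continuous functions is $\tau$-closed. Taking $\tau$-closures throughout the chain collapses all four sets to VC, yielding the theorem for both the step and the finite-rank versions.

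For the inclusion $\text{VC}\subset\overline{\{\text{step}\}}^{\,\tau}$, fix a virtually continuous $f$ and $\varepsilon>0$. By Theorem \ref{polnayamera} there are admissible semimetrics $\rho_X,\rho_Y$ and full-measure sets on which $f$ is $\rho_X\times\rho_Y$-continuous; the inner regularity Proposition then provides $\rho_X$-compact $K_X\subset X$ and $\rho_Y$-compact $K_Y\subset Y$ with $\mu(K_X),\nu(K_Y)>1-\varepsilon/2$. On the compact $K_X\times K_Y$ the function $f$ is uniformly continuous, so I partition $K_X$ and $K_Y$ into finitely many measurable pieces of sufficiently small $\rho$-diameter, adjoin the extra atoms $X\setminus K_X$ and $Y\setminus K_Y$, and let $g$ be the step function equal to a chosen value of $f$ on each rectangle inside $K_X\times K_Y$ and equal to $0$ on the remaining strips. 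Then $\{|f-g|>\varepsilon\}\subset(X\setminus K_X)\times Y\cup X\times(Y\setminus K_Y)$, whose thickness is at most $\varepsilon$, so $\tau(f,g)\le\varepsilon$.

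For the closedness of VC, suppose virtually continuous functions $f_n$ satisfy $\tau(f_n,f)\to 0$, and extract a subsequence with $\tau(f,f_n)<2^{-n}$. By the definition of thickness there exist $U_n\subset X$, $V_n\subset Y$ with $\mu(U_n)+\nu(V_n)<2^{-n}$ and $|f-f_n|\le 2^{-n}$ off $U_n\times Y\cup X\times V_n$. Using virtual continuity of each $f_n$, fix admissible semimetrics $\rho_X^{(n)},\rho_Y^{(n)}$ (normalized to be bounded by $1$) and sets $E_n,F_n$ of measure $>1-2^{-n}$ on which $f_n$ is $\rho_X^{(n)}\times\rho_Y^{(n)}$-continuous, and set
\[
\rho_X:=\sum_{n\ge 1}2^{-n}\rho_X^{(n)},\qquad \rho_Y:=\sum_{n\ge 1}2^{-n}\rho_Y^{(n)}.
\]
These are admissible semimetrics: measurability in two variables is preserved by a uniformly convergent pointwise sum, while separability of the combined semimetric on a full-measure subset follows from the diagonal embedding into the countable product $\prod_n(X,\rho_X^{(n)})$ with its weighted product metric, which is separable as a countable product of separable spaces. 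Now choose $N_0$ with $\sum_{n\ge N_0}2^{-n+1}<\varepsilon$ and set $X':=\bigcap_{n\ge N_0}(E_n\setminus U_n)$, $Y':=\bigcap_{n\ge N_0}(F_n\setminus V_n)$; both have measure $>1-\varepsilon$. To verify continuity of $f$ at $(x_0,y_0)\in X'\times Y'$, given $\delta>0$ pick $n\ge N_0$ with $2^{-n+1}<\delta/2$; the bounds $\rho_X^{(n)}\le 2^n\rho_X$ and $\rho_Y^{(n)}\le 2^n\rho_Y$ turn a sufficiently small $\rho_X\times\rho_Y$-ball around $(x_0,y_0)$ inside $E_n\times F_n$ into a small $\rho_X^{(n)}\times\rho_Y^{(n)}$-ball, on which continuity of $f_n$ yields oscillation below $\delta/2$; combining with $|f-f_n|\le 2^{-n}<\delta/4$ at both endpoints gives $|f(x,y)-f(x_0,y_0)|<\delta$.

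The main obstacle is the closedness step: one must bundle countably many independent admissible metrics into a single admissible pair $(\rho_X,\rho_Y)$ that is both genuinely admissible (with separability being the delicate point, settled by the product-space embedding) and strong enough that proximity in it controls every $\rho_X^{(n)},\rho_Y^{(n)}$ up to an $n$-dependent loss. The geometric weights combined with the thickness-based choice of $U_n,V_n$ balance these two demands, and the forward direction is then a routine compactness-plus-Luzin argument once inner regularity of the admissible metric is invoked.
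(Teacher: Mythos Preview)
The paper is an announcement and supplies no proof of Theorem~\ref{tau-virtual}, so there is nothing to compare your argument against; I can only assess it on its own terms, using the tools the paper provides.

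Your strategy is correct and both halves are essentially right. Two technical points in the closedness step need tightening. First, as written each $\rho_X^{(n)}$ is an admissible semimetric on $E_n$, so the sum $\rho_X=\sum_{n\ge 1}2^{-n}\rho_X^{(n)}$ is defined only on $\bigcap_{n\ge 1}E_n$, which need not contain your $X'=\bigcap_{n\ge N_0}(E_n\setminus U_n)$. Invoking Theorem~\ref{polnayamera} (which you already use in the forward direction) to put each $\rho_X^{(n)}$ on a set of full measure fixes this immediately. Second, and more delicately, the thickness condition~\eqref{thickness} only gives $|f-f_n|\le 2^{-n}$ \emph{almost everywhere} off $U_n\times Y\cup X\times V_n$, so your pointwise triangle estimate $|f(x,y)-f(x_0,y_0)|<\delta$ is not literally valid at every point of $X'\times Y'$; the residual null set need not have product form and cannot simply be absorbed into $U_n,V_n$. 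One clean repair: pass to $\rho_X$- and $\rho_Y$-compacts $K_X\subset X'$, $K_Y\subset Y'$ equal to their own supports, so that nonempty relatively open subsets have positive measure; then the a.e.\ inequality $|f_n-f_m|\le 2^{-n}+2^{-m}$ on $K_X\times K_Y$ extends by continuity and density to hold everywhere, whence $(f_n)$ is uniformly Cauchy there and its continuous limit agrees with $f$ a.e. An alternative that bypasses both issues at once is to run the closedness argument through Theorem~\ref{compactness}: from $\tau(f,f_n)<2^{-n}$ and Fubini one gets $\|f(x,\cdot)-f_n(x,\cdot)\|_{L^\infty(Y\setminus V_n)}\le 2^{-n}$ for $\mu$-a.e.\ $x\notin U_n$, and total boundedness of $\{f_n(x,\cdot)\}$ transfers to $\{f(x,\cdot)\}$ on a large product set.

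Finally, note that Theorem~\ref{arbitrarymetrics} makes the metric-summing construction unnecessary: fix any single admissible pair $(\rho_X,\rho_Y)$, apply Theorem~\ref{arbitrarymetrics} to each $f_n$ with tolerance $2^{-n}$ to get $E_n,F_n$, and your final estimate runs verbatim with this fixed pair. This is likely what the authors have in mind, since Theorem~\ref{arbitrarymetrics} is stated precisely to enable such uniformizations.
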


This definition of virtual continuity is even more explicitly measure-theoretical,
it does not appeal to metrics at all. Also, it has clear generalisation to arbitrary pair
of sigma-subalgebras. See \cite{S} on close concepts.

\subsection{Norm in the space of virtually continuous functions}

Defined convergence in $\tau$-metric is a virtual continuity analogue of the convergence in measure.
Known Banach spaces of measurable functions also have their analogues.

A measurable function $h(\cdot,\cdot)$ on the space $(X\times Y,\mu\times \nu)$ is called subbistochastic, if the measure with $\mu\times \nu$-density
 $|h(\cdot,\cdot)|$ is subbistochastic. Denote by  $\mathcal{S}$ the set of subbistochastic functions.

Define a finite or infinite norm of a measurable function $f(\cdot,\cdot)$ as

$$
\|f\|:=\inf\{\int_X |a(x)|d\mu(x)+\int_Y |b(y)|d\nu(y): |f(x,y)|\leq a(x)+b(y) \text{ a.e.}\}
$$

Next theorem is an analogue of known L. V. Kantorovich's duality theorem \cite{K} in the mass transportation problem
(concretely, of duality between measures space with Kantorovich distance and and the space of Lipschitz functions, see
also \cite{VK}).
\begin{theorem}\label{dual}
$$
\|f\|=
\sup \{\int_{X\times Y} |f(x,y)| h(x,y) dx dy: h\in \mathcal{S}\}.
$$
\end{theorem}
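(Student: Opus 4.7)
The plan is to prove the two inequalities in $\|f\|=\sup_{h\in\mathcal S}\int|f|h$ separately. The easy one, $\sup\le\|f\|$, follows from Fubini: for any $h\in\mathcal S$ and any $(a,b)$ with $|f(x,y)|\le a(x)+b(y)$,
$$\int_{X\times Y}|f|\,h\,d\mu d\nu\le\int_X a(x)\!\left(\int_Y h\,d\nu\right)\!d\mu+\int_Y b(y)\!\left(\int_X h\,d\mu\right)\!d\nu\le\int|a|d\mu+\int|b|d\nu,$$
using that the marginals of $|h|$ are at most $1$ a.e.\ by subbistochasticity; taking sup over $h$ and inf over $(a,b)$ closes this direction.

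For the reverse inequality, I would first reduce to $|f|\in L^\infty$ by truncating $F_M:=|f|\wedge M$: the sup side tends to $\sup_{h\in\mathcal S}\int|f|h$ by monotone convergence, while $\|F_M\|\nearrow\|f\|$ follows from a Ces\`aro-averaging (Koml\'os) argument applied to near-optimal primal pairs. Assuming now $|f|\in L^\infty$ with $M:=\||f|\|_\infty$, fix increasing finite partitions $\xi_n^X,\xi_n^Y$ generating the $\sigma$-algebras of $X,Y$, let $E_n$ be the conditional expectation onto $\xi_n^X\times\xi_n^Y$, and set $f_n:=E_n|f|$. Being a step function on finitely many rectangles, $f_n$ makes the computation of $\|f_n\|$ a finite linear program whose dual, by classical strong LP duality, equals the sup of $\int f_n h$ over step $h\in\mathcal S$. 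Since $E_n h\in\mathcal S$ whenever $h\in\mathcal S$ (conditional expectation preserves nonnegativity and both marginal constraints) and $\int f_n h=\int f_n\,E_n h=\int|f|\,E_n h$, this LP value equals $\sup_{h\in\mathcal S}\int f_n h\le\sup_{h\in\mathcal S}\int|f|h$. Conversely, averaging the inequality $a\oplus b\ge|f|$ over each rectangle gives $E_n^X a\oplus E_n^Y b\ge f_n$ with the same total mass, so $\|f_n\|\le\|f\|$.

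The principal obstacle is the converse $\|f\|\le\liminf\|f_n\|$; combined with the previous paragraph this gives $\|f\|=\lim\|f_n\|\le\sup_{h\in\mathcal S}\int|f|h$. Take $(a_n,b_n)$ nearly optimal for $f_n$, with $\int a_n+\int b_n\le\|f_n\|+1/n$. Truncating at level $M$ preserves the constraint (if $a_n(x)>M$ then $M\ge f_n(x,y)$ already) and only decreases the objective, so assume $0\le a_n,b_n\le M$. By Banach--Alaoglu in $L^\infty=(L^1)^*$, with separable predual since our base is a standard space, pass to a subsequence with $a_n\to a$ and $b_n\to b$ weakly-$*$. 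To transfer the constraint, test $a_n\oplus b_n\ge f_n$ against positive $\phi\in L^1(\mu\times\nu)$: by Fubini
$$\int a_n(x)\phi_X(x)d\mu+\int b_n(y)\phi_Y(y)d\nu\ge\int f_n\phi\,d\mu d\nu,$$
with $\phi_X(x):=\int\phi(x,y)d\nu(y)\in L^1(\mu)$ and $\phi_Y(y):=\int\phi(x,y)d\mu(x)\in L^1(\nu)$; the left side tends to $\int(a\oplus b)\phi\,d\mu d\nu$ by weak-$*$ convergence, and the right side tends to $\int|f|\phi\,d\mu d\nu$ by dominated convergence (using $|f_n|\le M$, $\phi\in L^1$, and $f_n\to|f|$ a.e.\ by martingale convergence). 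Hence $a\oplus b\ge|f|$ a.e., and testing against the constant $\mathbf 1\in L^1$ gives $\int a+\int b=\lim(\int a_n+\int b_n)\le\liminf\|f_n\|$, exhibiting a feasible pair for $|f|$ of the required mass.
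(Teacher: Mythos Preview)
Your argument is essentially correct, and since the paper itself does not give a proof of this theorem (it only remarks that ``the proof requires more delicate arguments than the Monge--Kantorovich problem'' because the nonnegative cone in an $L^1$-type space has empty interior, and defers details to another paper), there is nothing to compare it against line by line. Your route---discretize via conditional expectations onto a generating sequence of finite product partitions, invoke strong duality for the resulting finite LP, and pass to the limit via weak-$*$ compactness in $L^\infty$---is a clean way to bypass exactly the obstacle the authors flag: you never need a separation theorem in the infinite-dimensional primal space, because duality is established at the finite level and transported.

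A few small points worth tightening. In the easy direction you silently replace $h$ by $|h|$; this is legitimate because $|f|\ge 0$ and the definition of $\mathcal S$ depends only on $|h|$, so the supremum is attained on nonnegative $h$---but say so. In the truncation step for the reduction to bounded $|f|$, your Koml\'os argument is fine provided you first pass to an increasing sequence $M_k\to\infty$ and note that the Ces\`aro means of $F_{M_k}$ then converge a.e.\ to $|f|$ (monotone sequences have this property), so that the a.e.\ limit pair $(a,b)$ indeed dominates $|f|$; Fatou then controls $\int a+\int b$. In Step~5 you should extract the weak-$*$ convergent subsequence \emph{after} first choosing a subsequence along which $\|f_{n}\|$ tends to $\liminf_n\|f_n\|$, so that the displayed limit equals $\liminf_n\|f_n\|$ rather than a subsequential limit. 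Finally, when you assert that the finite LP value equals $\|f_n\|$, you are using that for a step function the infimum in the definition of $\|\cdot\|$ may be restricted to step $(a,b)$; this follows by the same averaging you use elsewhere, but deserves one sentence.
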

Coincidence of infimum and supremum is a duality statement in infinite-dimensional linear programming.
But in our case the proof requires more delicate arguments than the Monge-Kantorovich problem.
The reason is that we consider $L^1$-type space, in which the cone of non-negative functions has empty interior
(on the contrast to the space of continuous functions, used in the transport problem).
This does not allow to apply directly standard separability theorems.

\begin{theorem}
The closure of step functions in above norm consists exactly of all virtually continuous functions
having finite norm (in particular, each bounded virtually continuous function belongs
to this closure).
\end{theorem}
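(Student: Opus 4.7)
The plan is to establish both inclusions. For the inclusion ``norm closure of step functions $\subseteq$ virtually continuous with finite norm'', the finiteness of the norm is inherited through the triangle inequality, since every step function is bounded and hence of finite norm. For virtual continuity I would show that convergence in the norm implies convergence in the $\tau$-metric and then invoke Theorem~\ref{tau-virtual}. Concretely, given $\|f_n-f\|\to 0$, for each $n$ pick $a_n,b_n$ with $|f_n-f|\le a_n(x)+b_n(y)$ and $\int_X a_n\,d\mu+\int_Y b_n\,d\nu\le 2\|f_n-f\|$. For any fixed $\delta>0$, the sets $A_n=\{a_n>\delta/2\}$, $B_n=\{b_n>\delta/2\}$ satisfy $\mu(A_n)+\nu(B_n)\le 4\|f_n-f\|/\delta$ by Chebyshev, and outside $A_n\times Y\cup X\times B_n$ one has $|f_n-f|\le\delta$. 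Hence $\thi\{|f_n-f|>\delta\}\le 4\|f_n-f\|/\delta\to 0$, so $\tau(f_n,f)\to 0$, and Theorem~\ref{tau-virtual} places $f$ in the class of virtually continuous functions.

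For the reverse inclusion, fix a virtually continuous $f$ with $\|f\|<\infty$. I first reduce to the bounded case by truncation: set $f_M=\max(-M,\min(f,M))$, a Lipschitz function of $f$, hence virtually continuous. Choosing $a_0,b_0$ witnessing the finite norm, the pointwise estimate $(|f|-M)^+\le(a_0-M/2)^++(b_0-M/2)^+$ together with dominated convergence yields $\|f-f_M\|\to 0$ as $M\to\infty$. It therefore suffices to approximate a bounded virtually continuous $g$ with $|g|\le M$ in norm by step functions. For this I apply Theorem~\ref{compactness} to obtain $X'\subset X$, $Y'\subset Y$ of measures $>1-\eps$ such that $\{g_x|_{Y'}:x\in X'\}$ is totally bounded in $L_\infty(Y')$, pick a finite measurable partition $X'=\sqcup_{i=1}^n X_i$ and representatives $x_i\in X_i$ with $\|g_x-g_{x_i}\|_{L_\infty(Y')}<\delta$ for $x\in X_i$. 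Each bounded measurable function $g_{x_i}(\cdot)$ is $\delta$-approximated in $L_\infty(Y')$ by a simple function built from preimages of a partition of $[-M,M]$ into intervals of length $\delta$; taking a common refinement over $i$ yields a finite measurable partition $Y'=\sqcup_{j=1}^m Y_j$ on which every $g_{x_i}$ is within $\delta$ of a constant $c_{ij}$. Define the step function $h(x,y)=c_{ij}$ on $X_i\times Y_j$ and $h=0$ off $X'\times Y'$, extending the partitions by the extra pieces $X\setminus X'$ and $Y\setminus Y'$. On $X'\times Y'$ we have $|g-h|\le 2\delta$ and $|g-h|\le M$ elsewhere, so $|g-h|\le 2\delta+M\chi_{X\setminus X'}(x)+M\chi_{Y\setminus Y'}(y)$, which gives $\|g-h\|\le 2\delta+2M\eps$. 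Choosing $\delta,\eps$ small and composing with the truncation approximation completes the proof.

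The principal obstacle is that $\|\cdot\|$ sees only ``rank-one'' bounds $a(x)+b(y)$, so the error outside the good rectangle $X'\times Y'$ must be majorized by a separated sum. This is precisely why the reduction to bounded $g$ is indispensable: without it the $M$ in the final estimate would have to be replaced by a slicewise essential supremum which can be non-integrable. A secondary point is that Theorem~\ref{compactness} gives total boundedness only of the $x$-slices, forcing an asymmetric algorithm (partition $X'$ first, then $Y'$); one must check that the result is nevertheless a step function with respect to a genuine finite bi-partition of $X\times Y$, as the definition of step function requires.
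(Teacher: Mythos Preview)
The paper does not include a proof of this theorem; it is one of several results announced here and deferred to a sequel (``Detailed analysis is to be presented in another paper''). So there is nothing in the paper to compare against.

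Your argument is correct and is, in fact, the natural route suggested by the surrounding material. The forward inclusion uses exactly the intended link between $\|\cdot\|$ and $\tau$: a Chebyshev estimate on the witnesses $a_n,b_n$ shows norm convergence implies $\tau$-convergence, and Theorem~\ref{tau-virtual} then gives virtual continuity. For the reverse inclusion, your truncation lemma $(|f|-M)^+\le(a_0-M/2)^++(b_0-M/2)^+$ is the right device to reduce to bounded functions, and the approximation of a bounded virtually continuous $g$ by step functions via Theorem~\ref{compactness} is precisely what that precompactness criterion is designed for. The only points that warrant a sentence of justification in a full write-up are the measurability of the partition pieces $X_i=\{x\in X':\|g_x-g_{x_i}\|_{L_\infty(Y')}<\delta\}$ (which follows since $x\mapsto\operatorname*{ess\,sup}_{y\in Y'}|g(x,y)-g_{x_i}(y)|$ is measurable by Fubini-type considerations, or one can avoid the issue by using the $\delta$-net elements themselves as centers rather than representatives $x_i$), and the stability of virtual continuity under composition with a Lipschitz map (immediate from Theorem~\ref{tau-virtual} or directly from the definition).
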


Denote this space by $VC^1$. It is an analogue of the space $L^1$ for virtually continuous functions
and is a pre-dual for the space of polymorphisms with bounded densities of projections.

\begin{theorem}\label{dualspace}
The space dual to $VC^1$ is a space of signed measures $\eta$ on $X \times Y$ with finite norm
$$\|\eta\|_{\rm{me}}=\max\{\Big|\Big|\frac{\partial P^x_*|\eta|}{\partial \mu}\Big|\Big|_{L^\infty(X,\mu)},\Big|\Big|\frac{\partial P^y_*|\eta|}{\partial \nu}\Big|\Big|_{L^\infty(Y,\nu)}\},$$
where $P^x$ and $P^y$ are projections onto $X$ and $Y$ respectively and $|\eta|$ is a full variation of a signed measure $\eta$.
\end{theorem}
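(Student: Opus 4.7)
My plan is to prove the theorem by constructing the isomorphism $\eta\mapsto\Lambda_\eta$ in three pieces: the assignment is a well-defined contraction, it is isometric, and it exhausts $(VC^1)^*$. For contractivity, set $\Lambda_\eta(f):=\int f\,d\eta$ for a signed measure $\eta$ with $\|\eta\|_{\rm me}=M<\infty$. If $|f(x,y)|\leq a(x)+b(y)$ holds $\mu\times\nu$-a.e.\ with $a,b\geq 0$, then
$$|\Lambda_\eta(f)|\leq\int|f|\,d|\eta|\leq\int a\,d(P^x_*|\eta|)+\int b\,d(P^y_*|\eta|)\leq M\Big(\int a\,d\mu+\int b\,d\nu\Big),$$
because the marginal Radon--Nikodym densities are pointwise bounded by $M$. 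Taking the infimum over admissible decompositions gives $|\Lambda_\eta(f)|\leq M\|f\|$, so $\Lambda_\eta$ extends from step functions by continuity to a bounded functional on $VC^1$ of norm at most $\|\eta\|_{\rm me}$.

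For the reverse norm inequality I use the Hahn decomposition, writing $d\eta=\sigma\,d|\eta|$ with measurable $\sigma\in\{\pm1\}$. For each measurable $A\subset X$ the function $f_A(x,y):=\sigma(x,y)\chi_A(x)$ is bounded measurable, hence virtually continuous, and satisfies $|f_A|\leq\chi_A(x)$, whence $\|f_A\|\leq\mu(A)$. Since $\Lambda_\eta(f_A)=\int_{A\times Y}\sigma\,d\eta=(P^x_*|\eta|)(A)$, we obtain $(P^x_*|\eta|)(A)\leq\|\Lambda_\eta\|\,\mu(A)$ for every $A$, forcing $\|dP^x_*|\eta|/d\mu\|_{L^\infty(\mu)}\leq\|\Lambda_\eta\|$; the symmetric argument on $Y$ yields $\|\eta\|_{\rm me}\leq\|\Lambda_\eta\|$ and equality.

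For surjectivity, given $\Lambda\in(VC^1)^*$ I would define $\eta_0(A\times B):=\Lambda(\chi_{A\times B})$ on measurable rectangles. Since $\|\chi_{(\bigcup_{i>n}A_i)\times B}\|\leq\min(\mu(\bigcup_{i>n}A_i),\nu(B))\to 0$ for disjoint $A_i$, the set function $\eta_0$ is countably additive in each argument and bounded by $\|\Lambda\|\min(\mu(A),\nu(B))$; Radon--Nikodym in each variable separately then produces densities in $L^\infty(\mu)$ and $L^\infty(\nu)$ of norm at most $\|\Lambda\|$. The main obstacle is extending this bimeasure to a genuine countably additive signed measure $\eta$ on the product $\sigma$-algebra, since bimeasures do not in general arise from measures. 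My plan is to discretize: for each pair of finite measurable partitions $\xi=(\xi_X,\xi_Y)$ the atom values of $\eta_0$ determine a finitely supported signed measure $\eta_\xi$ on the finite product algebra, and the contractivity step applied to $\eta_\xi$ yields $\|\eta_\xi\|_{\rm me}\leq\|\Lambda\|$; along the directed system of refinements the corresponding functionals $\Lambda_{\eta_\xi}$ all lie in the ball of radius $\|\Lambda\|$ in $(VC^1)^*$, and a weak-$*$ accumulation point produces $\eta$ which agrees with $\eta_0$ on rectangles and satisfies $\|\eta\|_{\rm me}\leq\|\Lambda\|$. Density of step functions in $VC^1$ then propagates $\Lambda(f)=\int f\,d\eta$ to all of $VC^1$, closing the isometric isomorphism.
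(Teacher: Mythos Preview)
The paper is an announcement and contains no proof of this theorem, so there is nothing in the text to compare against; I assess your argument on its own.

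Your reverse-inequality step has a genuine error. You assert that $f_A(x,y)=\sigma(x,y)\chi_A(x)$ is ``bounded measurable, hence virtually continuous'', but that implication is false: the paper itself exhibits $\chi_{\{x\ge y\}}$ as a bounded measurable function on $[0,1]^2$ that is \emph{not} virtually continuous, and the Hahn sign $\sigma$ of a possibly singular $\eta$ has no reason to behave better. Worse, $\sigma$ is defined only $|\eta|$-a.e., so when $\eta\perp\mu\times\nu$ the object $f_A$ is not even a well-defined $\mu\times\nu$-equivalence class, and the functional $\Lambda_\eta$---which you built by continuous extension from step functions---cannot be applied to it. The bound $\|\eta\|_{\rm me}\le\|\Lambda_\eta\|$ must be obtained with genuinely virtually continuous test functions. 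One workable route: pick $A\subset X$ with $(P^x_*|\eta|)(A)>(M-\varepsilon)\mu(A)$ and test against step functions $\sum_{i,j}\operatorname{sgn}\bigl(\eta(A_i\times Y_j)\bigr)\chi_{A_i\times Y_j}$ with $\{A_i\}$ a partition of $A$; these have $\|\cdot\|$-norm at most $\mu(A)$, and since rectangles generate the product $\sigma$-algebra, the pairings $\sum_{i,j}|\eta(A_i\times Y_j)|$ increase to $|\eta|(A\times Y)$ along refinements. (Alternatively, a correct surjectivity argument already delivers this inequality, making the separate step redundant.)

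Two smaller points. In the contractivity display you use $|f|\le a+b$ under $d|\eta|$, but that bound is only $\mu\times\nu$-a.e., which says nothing $|\eta|$-a.e.\ for singular $\eta$; this is harmless once you restrict the computation to canonical representatives of step functions, as you eventually do. In the surjectivity sketch, the weak-$*$ accumulation point you take lives in $(VC^1)^*$, where it is merely a functional, so ``produces $\eta$'' is circular as written. Passing instead to a limit in $C([0,1]^2)^*$ (after realizing $X,Y$ as $[0,1]$) does yield a signed measure, but you must then check that the marginal total-variation bounds $P^x_*|\eta|\le\|\Lambda\|\,\mu$, $P^y_*|\eta|\le\|\Lambda\|\,\nu$ survive the limit; this is not automatic, since total variation is only lower semicontinuous under weak-$*$ convergence of measures.
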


\begin{corollary} Virtually continuous functions from $VC^1$
(in particular, bounded virtually continuous functions)
have a well defined integral not only over sets of positive measure (as all summable functions do have),
but also also over bistochastic (singular) measures: for instance, over Lebesgue measure on
diagonal $\{x=y\}\subset [0,1]^2$, or over measure concentrated on a graph
of a map with quasiinvariant measure.
To summarize, virtually continuous functions have traces (restrictions) on diagonal
and other subsets in the sense of Sobolev trace theorems.
\end{corollary}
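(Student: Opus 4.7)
The plan is to invoke Theorem \ref{dualspace} directly. A bistochastic measure $\lambda$ on $X\times Y$ has projections $P^x_*\lambda=\mu$ and $P^y_*\lambda=\nu$ by definition, so both Radon--Nikodym derivatives appearing in $\|\cdot\|_{\rm me}$ are identically $1$; hence $\|\lambda\|_{\rm me}=1$. For a quasi-bistochastic measure --- in particular, the measure carried by the graph of a map with quasi-invariant image measure --- the projection densities are bounded by hypothesis, so again $\|\lambda\|_{\rm me}<\infty$. By Theorem \ref{dualspace}, each such $\lambda$ therefore represents a continuous linear functional on $VC^1$, and I would simply \emph{define} $\int f\,d\lambda:=\langle\lambda,f\rangle$ for $f\in VC^1$.

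Next I would check that this abstract definition agrees with the naive one on the dense subspace where the naive one makes sense. On a step function $f=\sum_{i,j}c_{ij}\chi_{X_i\times Y_j}$ the duality pairing in Theorem \ref{dualspace} is, by construction, $\sum_{i,j}c_{ij}\lambda(X_i\times Y_j)=\int f\,d\lambda$; by bilinearity the same holds for finite rank functions. Since, by the preceding theorem, step functions are $\|\cdot\|$-dense in $VC^1$, continuity of $\lambda$ as a functional guarantees that $\int f_n\,d\lambda\to\int f\,d\lambda$ for any approximating sequence $f_n\to f$ in $VC^1$, and that the limit does not depend on the chosen sequence. Specializing to $X=Y=[0,1]$ and $\lambda$ equal to Lebesgue measure on the diagonal gives the ``trace on the diagonal'', and the graph-of-a-map case is completely analogous.

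The main obstacle, and what makes the corollary a genuine trace theorem rather than a bookkeeping exercise, is to justify that this duality pairing genuinely represents a pointwise restriction of $f$ to the singular support of $\lambda$. My route would be: use Theorems \ref{polnayamera} and \ref{arbitrarymetrics} to fix admissible semimetrics $\rho_X,\rho_Y$ on full-measure subsets $X',Y'$ on which $f$ is honestly continuous; use the Radon property from the proposition on admissible metrics to exhaust $X'\times Y'$ by $\rho_X\times\rho_Y$-compact sets of $\lambda$-full measure (possible because the $X$- and $Y$-marginals of $|\lambda|$ are absolutely continuous with bounded density); and then show that the continuous extension of $f$ on these compacta integrates against $\lambda$ to the same value as $\langle\lambda,f\rangle$. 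The step functions $f_n$ converge uniformly to this continuous representative on each such compactum, which ties the abstract functional to the pointwise picture and completes the proof.
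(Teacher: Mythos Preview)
Your proposal is correct and matches the paper's approach exactly: the paper states this as an immediate corollary of Theorem~\ref{dualspace} with no further argument, and your first two paragraphs spell out precisely that deduction (bistochastic measures have $\|\lambda\|_{\rm me}=1$, hence lie in $(VC^1)^*$, and the pairing agrees with ordinary integration on the dense step functions). Your third paragraph --- tying the abstract functional to a pointwise restriction via compact exhaustion --- goes beyond anything the paper supplies; it is a reasonable elaboration of why the word ``trace'' is earned, though note that $VC^1$-convergence of step functions does not directly give uniform convergence on compacta, so that last sentence would need more care if you pursued it.
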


 \section{Applications: embeddings theorems, trace theorems and restrictions of metrics}\label{tvlo}

Here we mention some applications of virtual continuity.

\subsection{Sobolev spaces and trace theorems}

\begin{theorem}\label{sob} Let $\Omega_1,\Omega_2$ be domains of dimensions $d_1,d_2$ respectively, suppose that
$pl>d_2$ or $p=1,l=d_2$. Then functions from the Sobolev space $W_p^l(\Omega_1\times \Omega_2)$
($l$-th generalized derivatives are summable with power $p$) are virtually continuous as functions of two
variables $x\in \Omega_1$, $y\in \Omega_2$. Embedding $W_p^l(\Omega_1\times \Omega_2)$ into $VC^1 (\Omega_1,K)$
is continuous for any compact subset $K$ of the domain $\Omega_2$.
\end{theorem}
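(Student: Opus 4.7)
The plan is to transfer the question to the one-variable slice $y\mapsto f(x,y)$ for a fixed $x$, apply the classical Sobolev embedding there, and then invoke Lemma~\ref{razd}. First I would show that for $\mu$-almost every $x\in\Omega_1$ the slice $f(x,\cdot)$ belongs to $W_p^l(\Omega_2)$. For every multi-index of the form $(0,\beta)$ with $|\beta|\le l$ the distributional derivative $D_y^{\beta}f$ lies in $L^p(\Omega_1\times\Omega_2)$, so Fubini gives $D_y^{\beta}f(x,\cdot)\in L^p(\Omega_2)$ for a.e.\ $x$. Approximating $f$ by $C^{\infty}$ functions in the $W_p^l$ norm and passing to an a.e.\ convergent subsequence confirms that these are indeed the distributional $y$-derivatives of the slice.

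Next I would invoke the classical Sobolev embedding $W_p^l(\Omega_2)\hookrightarrow C(K)$ for any compact $K\subset\Omega_2$, available precisely under the assumption $pl>d_2$ (together with the borderline case $p=1,\,l=d_2$). This yields a continuous representative of $f(x,\cdot)$ on $K$ for a.e.\ $x$, with the fibre estimate $\|f(x,\cdot)\|_{C(K)}\le C_K\|f(x,\cdot)\|_{W_p^l(\Omega_2)}$. Taking $\rho_Y$ to be the Euclidean metric on $K$ (which is admissible with respect to Lebesgue measure), Lemma~\ref{razd} immediately delivers virtual continuity of $f$ on $\Omega_1\times K$; exhausting $\Omega_2$ by compacts covers the global statement.

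For the continuity of the embedding into $VC^1(\Omega_1,K)$, I would use the definition of the $VC^1$-norm with $a(x):=\|f(x,\cdot)\|_{C(K)}$ and $b\equiv 0$, giving
\[
\|f\|_{VC^1(\Omega_1,K)}\;\le\;\int_{\Omega_1}\|f(x,\cdot)\|_{C(K)}\,d\mu(x)\;\le\;C_K\int_{\Omega_1}\|f(x,\cdot)\|_{W_p^l(\Omega_2)}\,d\mu(x).
\]
H\"older's inequality in $x$ (for $p>1$) or the triangle inequality (for $p=1$), combined with the Fubini identity $\int_{\Omega_1}\|D_y^{\beta}f(x,\cdot)\|_{L^p(\Omega_2)}^p\,d\mu(x)=\|D_y^{\beta}f\|_{L^p(\Omega_1\times\Omega_2)}^p\le\|f\|_{W_p^l}^p$, bounds the right-hand side by $C'_K\|f\|_{W_p^l(\Omega_1\times\Omega_2)}$.

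The main obstacle is the careful bookkeeping in the first step: guaranteeing that the distributional $y$-derivatives of the slice $f(x,\cdot)$ actually coincide, for a.e.\ $x$, with the restrictions of the mixed derivatives $D_y^{\beta}f$ requires selecting a good Borel representative of $f$ and a mollification argument. The critical endpoint $p=1,\,l=d_2$ is also delicate, as the continuous Sobolev embedding there is the borderline one, obtained by iterating the Gagliardo--Nirenberg--Sobolev inequality rather than by the subcritical Morrey bound. Once the fibre-wise Sobolev regularity is in place, the rest of the argument is essentially routine thanks to Lemma~\ref{razd} and the variational definition of the $VC^1$-norm.
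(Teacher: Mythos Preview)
Your argument is correct and follows the same underlying analytic thread as the paper's proof---namely, the slice-wise Sobolev embedding $W_p^l(\Omega_2)\hookrightarrow C(K)$, the definition of $a(x)$ as the fibre norm, and the Fubini estimate $\int a\lesssim\|f\|_{W_p^l}$. The divergence is only in how virtual continuity itself is extracted. The paper does not work directly with a general $f\in W_p^l$: it first takes $f$ smooth, for which the inequality $\|f\|_{VC^1(\Omega_1,K)}\le c\,\|f\|_{W_p^l}$ is immediate (no slice-regularity issue at all), and then passes to arbitrary $f$ by density of smooth functions in $W_p^l$ together with the completeness of $VC^1$. Your route instead establishes fibre-wise Sobolev regularity of $f(x,\cdot)$ for a.e.\ $x$ and then invokes Lemma~\ref{razd}. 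The trade-off is clear: the paper's density argument completely bypasses the ``careful bookkeeping'' you flag (that $D_y^{\beta}f(x,\cdot)$ is genuinely the distributional derivative of the slice, and that a jointly measurable representative can be chosen), at the price of appealing to the structure theorem that $VC^1$ is closed under norm limits. Your approach is more self-contained with respect to the $VC^1$ machinery but pays for it with the slice-regularity verification. Both yield the same continuity constant for the embedding.
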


\begin{proof}
Using the theorem of embedding of Sobolev space into continuous functions (see, for instance, \cite{Ma,AF}),
we have the following estimate for functions $h(y)\in W_p^l(\Omega_2)$:
$$
\|h\|_{C(K)}\leq c(\Omega_2,K) \|h\|_{W_p^l(\Omega_2)}.
$$
Let $f(x,y)\in W_p^l(\Omega_1\times \Omega_2)$ be a smooth function.
Set $$a(x):=\|f(x,\cdot)\|_{W_p^l(\Omega_2)}.$$
Then by Fubini's theorem $a\in L^1(\Omega_1)$ and
$$
\int |a|\leq c(\Omega_1,\Omega_2) \|f(x,y)\|_{W_p^l(\Omega_1\times \Omega_2)}.
$$
The following estimate holds on $\Omega_1\times K$:
$$
|f(x,y)|\leq \|f(x,\cdot)\|_{C(K)}\leq c(\Omega_2,K) a(x).
$$
Summarizing this we have
\begin{equation}\label{vlozhenie}
\|f\|_{VC^1(\Omega_1,K)}\leq c(\Omega_1,\Omega_2,K) \|f\|_{W_p^l(\Omega_1\times \Omega_2)}.
\end{equation}
Each function in the class
 $W_p^l(\Omega_1\times \Omega_2)$ is a limit of a sequence of smooth functions,
by (\ref{vlozhenie}) it is a limit in $VC^1$ as well.
\end{proof}

So, under conditions of this theorem we may integrate functions over
quasibistochastic measures. It generalizes usual theorems about traces on submanifolds.

\subsection{Nuclear operators in Hilbert space}

It is well known that the space of nuclear operators in the Hilbert space
$L^2$ is a projective tensor product of Hilbert spaces.
Their kernels are measurable functions of two variables, which can hardly be described directly.
the following theorem claims that kernels of nuclear operators are virtually
continuous as functions of two variables.
Note that kernels of Hilbert-Schmidt operators are not in general
virtually continuous.

\begin{theorem}
Let $(X,\mu)$, $(Y,\nu)$ be standard spaces. the space of kernels of nuclear operators from $L^2(X)$ to $L^2(Y)$
(with Schatten -- von Neumann norm) embeds continuously into $VC^1$.
\end{theorem}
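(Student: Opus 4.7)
The plan is to exploit the projective tensor decomposition of nuclear kernels and reduce the embedding to a single rank-one estimate. For any nuclear operator $T:L^2(X)\to L^2(Y)$ with kernel $K$ and any $\eps>0$, one can write
\[
K(x,y)=\sum_{n=1}^{\infty} f_n(x)\,g_n(y),\qquad \sum_{n=1}^{\infty}\|f_n\|_{L^2(\mu)}\|g_n\|_{L^2(\nu)}\le \|T\|_1+\eps,
\]
for instance via the singular value decomposition of $T$, which already achieves equality without the $\eps$. This is essentially the definition of the nuclear norm as the projective tensor norm on $L^2(X)\,\hat\otimes_\pi L^2(Y)$.

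The heart of the argument is the rank-one bound
\[
\|f\otimes g\|_{VC^1}\le \|f\|_{L^2(\mu)}\,\|g\|_{L^2(\nu)},
\]
which is a direct consequence of AM-GM: for any $t>0$,
\[
|f(x)g(y)|\le \frac{t}{2}|f(x)|^2+\frac{1}{2t}|g(y)|^2,
\]
so inserting $a(x):=\tfrac{t}{2}|f(x)|^2$ and $b(y):=\tfrac{1}{2t}|g(y)|^2$ into the definition of the $VC^1$-norm gives $\|f\otimes g\|_{VC^1}\le \tfrac{t}{2}\|f\|_2^2+\tfrac{1}{2t}\|g\|_2^2$, and optimising at $t=\|g\|_2/\|f\|_2$ yields the product of $L^2$-norms.

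Combining these two ingredients, the partial sums $K_N:=\sum_{n=1}^N f_n\otimes g_n$ form a Cauchy sequence in the $VC^1$-norm, because
\[
\|K_N-K_M\|_{VC^1}\le \sum_{n=M+1}^{N}\|f_n\|_2\|g_n\|_2 \longrightarrow 0.
\]
Each $K_N$ is of finite rank, hence virtually continuous and in $VC^1$. Since by the previous theorem $VC^1$ is the $\|\cdot\|$-closure of step functions (and in particular is complete), the sequence $\{K_N\}$ converges in $VC^1$ to some $\widetilde K$. On the other hand, the elementary estimate $\|h\|_{L^1(\mu\times\nu)}\le \|h\|_{VC^1}$ --- which follows at once from $|h|\le a(x)+b(y)$ together with the fact that $\mu,\nu$ are probability measures --- forces $K_N\to\widetilde K$ in $L^1$ as well. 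But $K_N\to K$ in $L^1$ independently of the $VC^1$ story, so $\widetilde K=K$ almost everywhere. Taking $\eps\to 0$ then gives $\|K\|_{VC^1}\le \|T\|_1$, which is exactly the continuity of the embedding.

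The only delicate point I foresee is the completeness/identification step of the previous paragraph: one must know that the $VC^1$-norm really makes this space into a Banach space and that $VC^1$-convergence implies $L^1$-convergence, so that the norm-limit of the partial sums may be identified a.e.\ with the kernel $K$. Both facts are essentially formal given the characterisation of $VC^1$ as the $\|\cdot\|$-closure of finite rank functions recalled above, so beyond the Young-type rank-one inequality nothing deeper than standard tensor-product manipulations is required.
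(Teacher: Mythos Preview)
The paper is an announcement and does not include a proof of this theorem, so there is no proof of the authors' to compare against. Your argument is essentially correct and is very likely the intended one: the decisive observation is the rank-one estimate
\[
\|f\otimes g\|\;\le\;\|f\|_{L^2(\mu)}\,\|g\|_{L^2(\nu)},
\]
obtained from the elementary inequality $|f(x)g(y)|\le \tfrac{t}{2}|f(x)|^2+\tfrac{1}{2t}|g(y)|^2$ and optimisation in $t$. Combined with the projective tensor (or singular value) decomposition of a nuclear kernel, this immediately yields $\|K\|\le\|T\|_1$ and membership of $K$ in $VC^1$.

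One remark on the point you yourself flag as delicate. You route the argument through completeness of $VC^1$ and the comparison $\|\cdot\|_{L^1}\le\|\cdot\|$ in order to identify the abstract limit with $K$. This works, but it can be bypassed entirely by applying the AM--GM bound directly to the \emph{tail}: since the series $\sum_n f_n\otimes g_n$ converges to $K$ in $L^2(\mu\times\nu)$ (nuclear $\Rightarrow$ Hilbert--Schmidt), one has a.e.
\[
|K-K_N|\;\le\;\sum_{n>N}|f_n(x)|\,|g_n(y)|\;\le\;\underbrace{\sum_{n>N}\tfrac{t_n}{2}|f_n(x)|^2}_{=:a_N(x)}\;+\;\underbrace{\sum_{n>N}\tfrac{1}{2t_n}|g_n(y)|^2}_{=:b_N(y)},
\]
and with $t_n=\|g_n\|_2/\|f_n\|_2$ this gives $\|K-K_N\|\le\sum_{n>N}\|f_n\|_2\|g_n\|_2\to 0$. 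Thus $K$ has finite $\|\cdot\|$-norm and is a $\|\cdot\|$-limit of finite-rank functions, hence lies in $VC^1$ by the characterisation theorem you quote; no separate completeness statement is needed. With this small streamlining your proof is complete and, as you say, uses nothing beyond the Young-type inequality and standard tensor manipulations.
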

It implies that such kernels may be integrated not only over diagonal when $X=Y$, which is well known,
but by bistochastic measures. But the space $VC^1$ is wider than kernels of nuclear
operators. If we look at  $VC^1$ as to the space of kernels of integral operators,
it is not unitray invariant, on the contrast to Schatten--von Neumann spaces.
indeed, the definition of $VC^1$ essentially uses known
sigma-subalgebras, which do not have necessary invariance. Close question is
considered in \cite{GD}.

\subsection{Restrictions of metrics}

The following problem was one of origins of this paper.
Let $(X,\mu)$ be a standard space with continuous measure.
Assume that $\rho$ is an admissible metric and $\xi$ is a measurable partition of $(X,\mu)$ with parts of null measure
(say, $\xi$ is a partition onto level sets of function which is not constant on sets of positive measure).
May we correctly restrict our metric (a s a function of two variables) onto elements of this partition?

It is not immediately clear, since the metric is a priori just a measurable function.
But admissible metric is virtually continuous, and so for our goal it suffices to
define a bistochastic measure, onto which we have to restrict it. Suppose for simplicity that
$X=[0,1]^2$, $\mu$ is a Lebesgue measure, $\xi$ is a partition onto vertical lines.
Then we say about restriction of virtually continuous function defined on $X^2=[0,1]^4$ onto
three-dimensional submanifold
 $\{(x_1,x_2,x_3,x_4):x_1=x_3\}$.
It is easy to see that such a submanifold equipped by a three-dimensional Lebesgue  measure
defines a bistochastic measure on $X\times X$.

\section{Acknowledgements}

We are grateful to L.Lovasz, who  sent us his recent monograph \cite{L}, in which close questions
are discussed, and to A.Logunov for paying our attention to the
possibility of dual definition of the thickness.

\newpage

\end{document}